\newtheorem{theorem}{Theorem}[section]
\newtheorem{prop}{Proposition}[section]
\newtheorem{lemma}{Lemma}[section]
\newtheorem{rem}{Remark}[section]
\newtheorem{cor}{Corollary}[section]
\begin{document}
\author{Mark Pankov}
\title{Characterization of isometric embeddings of Grassmann graphs}
\subjclass[2000]{15A04, 51M35}
\keywords{Grassmann graph, Johnson graph, apartment, isometric embedding}
\address{Department of Mathematics and Computer Sciences, University of Warmia and Mazury,
S{\l}oneczna 54, 10-710 Olsztyn, Poland}
\email{pankov@matman.uwm.edu.pl}

\begin{abstract}
Let $V$  be an $n$-dimensional left vector space over a division ring $R$.
We write ${\mathcal G}_{k}(V)$ for the Grassmannian formed by $k$-dimensional subspaces of $V$
and denote by $\Gamma_{k}(V)$ the associated Grassmann graph.
Let also $V'$  be an $n'$-dimensional left vector space over a division ring $R'$.
Isometric embeddings of $\Gamma_{k}(V)$ in  $\Gamma_{k'}(V')$ are classified in \cite{Pankov2}.
A classification of $J(n,k)$-subsets in ${\mathcal G}_{k'}(V')$,
i.e. the images of isometric embeddings of the Johnson graph $J(n,k)$ in $\Gamma_{k'}(V')$,
is presented in \cite{Pankov1}.
We characterize isometric embeddings of $\Gamma_{k}(V)$ in  $\Gamma_{k'}(V')$ as mappings which transfer
apartments of ${\mathcal G}_{k}(V)$ to $J(n,k)$-subsets of ${\mathcal G}_{k'}(V')$.
This is a generalization of the earlier result concerning apartments preserving  mappings \cite[Theorem 3.10]{Pankov-book}.

\end{abstract}

\maketitle

\section{Introduction}
Let $V$ be an $n$-dimensional left vector space over a division ring $R$ and let ${\mathcal G}_{k}(V)$ be
the Grassmannian formed by $k$-dimensional subspaces of $V$.
The associated Grassmann graph will be denoted by $\Gamma_{k}(V)$.
By classical Chow's theorem \cite{Chow}, every automorphism of $\Gamma_{k}(V)$ with $1<k<n-1$ is induced
by a semilinear automorphism of $V$ or a semilinear isomorphism of $V$ to the dual vector space $V^{*}$ and
the second possibility can be realized only in the case when $n=2k$.
The statement fails for $k=1,n-1$. In this case, any two distinct vertices of $\Gamma_{k}(V)$ are adjacent
and any bijective transformation of ${\mathcal G}_{k}(V)$ is an automorphism of $\Gamma_{k}(V)$.

Results closely related to Chow's theorem can be found in \cite{BH,Havlicek,Havlicek2,Huang,Lim,Kreuzer2,PankovGD},
see also \cite[Section 3.2]{Pankov-book}.

One of resent generalizations of Chow's theorem is the classification of
isometric embeddings of $\Gamma_{k}(V)$ in $\Gamma_{k'}(V')$, where $V'$ is an $n'$-dimensional left vector space
over a division ring $R'$ \cite{Pankov2}.
The existence of such embeddings implies that
\begin{equation}\label{eq1-1}
\min\{k,n-k\}\le \min\{k',n'-k'\},
\end{equation}
i.e. the diameter of $\Gamma_{k}(V)$ is not greater than the diameter of $\Gamma_{k'}(V')$.
The case $k=1,n-1$ is trivial:
every isometric embedding of $\Gamma_{k}(V)$ in $\Gamma_{k'}(V')$
is a bijection  to a clique of $\Gamma_{k'}(V')$.
If $1<k<n-1$ then isometric embeddings of $\Gamma_{k}(V)$ in $\Gamma_{k'}(V')$
are defined by semilinear $(2k)$-embeddings,
i.e. semilinear injections which transfer any $2k$ linearly independent vectors to linearly independent vectors.

A result of similar nature is obtained in \cite{Pankov1}.
This is the classification of the ima\-ges of
isometric embeddings of the Johnson graph $J(n,k)$ in the Grassmann graph $\Gamma_{k'}(V')$.
As above, we need \eqref{eq1-1} which guarantees that the diameter of $J(n,k)$
is not greater than the diameter of $\Gamma_{k'}(V')$.
The images of isometric embeddings of $J(n,k)$ in $\Gamma_{k'}(V')$ will be called
$J(n,k)$-{\it subsets} of ${\mathcal G}_{k'}(V')$.

Suppose that $1<k<n-1$ (the case $k=1,n-1$ is trivial).
If $n=2k$ then every $J(n,k)$-subset is an apartment in a parabolic subspace of ${\mathcal G}_{k'}(V')$
and we get an apartment of ${\mathcal G}_{k'}(V')$ if $n=n'$ and $k'=k,n-k$.
In the case when $n\ne 2k$, there are two distinct types of $J(n,k)$-subsets.

If $n=n'$ then every apartments preserving mapping of ${\mathcal G}_{k}(V)$ to ${\mathcal G}_{k}(V')$ with $1<k<n-1$
is induced by a semilinear embedding of $V$ in $V'$ or a semilinear embedding of $V$ in $V'^{*}$
and the second possibility can be realized only in the case when $n=2k$ \cite[Theorem 3.10]{Pankov-book}.
For $k=1,n-1$ this fails. By \cite{HK}, there are apartments preserving mappings of ${\mathcal G}_{1}(V)$
to itself which can not be defined by semilinear mappings.

Our main result (Theorem \ref{theorem-main}) characterizes isometric embeddings of $\Gamma_{k}(V)$ in $\Gamma_{k'}(V')$
as mappings which transfer apartments of ${\mathcal G}_{k}(V)$ to $J(n,k)$-subsets of ${\mathcal G}_{k'}(V')$.
As a consequence, we get  a generalization of the above mentioned result on apartments preserving mappings.

\section{Grassmann graph and Johnson graph}
\setcounter{equation}{0}
\subsection{Graph theory}
In this subsection we recall some concepts of the general graph theory.

A subset in the vertex set of a graph is called a {\it clique} if any two distinct vertices in this subset are adjacent
(connected by an edge). Every clique is contained in a maximal clique
(this is trivial if the vertex set is finite and we use Zorn lemma in the infinite case).

The {\it distance} between two vertices in a connected graph $\Gamma$ is defined as the smallest number $i$
such that there is a path consisting of $i$ edges and connecting these vertices.
The {\it diameter} of $\Gamma$ is the greatest distance between two vertices.

An {\it embedding} of a graph $\Gamma$ in a graph $\Gamma'$ is an injection of the vertex set of $\Gamma$ to the vertex set of $\Gamma'$
such that adjacent vertices go to adjacent vertices and non-adjacent vertices go to non-adjacent vertices.
Every surjective embedding is an isomorphism.
An embedding is said to be {\it isometric} if it preserves the distance between any two vertices.
Every embedding preserves the distances $1$ and $2$. Thus any embedding of a graph with diameter $2$ is isometric.

\subsection{Grassmann graph}
Let $V$ be an $n$-dimensional left vector space over a division ring $R$.
For every $k\in \{0,\dots,n\}$ we denote by ${\mathcal G}_{k}(V)$ the Grassmannian formed by $k$-dimensional subspaces of $V$.
Then ${\mathcal G}_{0}(V)=\{0\}$ and ${\mathcal G}_{n}(V)=\{V\}$.
In the case when $1\le k\le n-1$,
two elements of ${\mathcal G}_{k}(V)$ are  said to be {\it adjacent} if their intersection is $(k-1)$-dimensional
(this is equivalent to the fact that their sum is $(k+1)$-dimensional).

The {\it Grassmann graph} $\Gamma_{k}(V)$ is the graph whose vertex set is ${\mathcal G}_{k}(V)$
and whose edges are pairs of adjacent $k$-dimensional subspaces.
The graph  $\Gamma_{k}(V)$ is connected, the distance $d(S,U)$ between two vertices $S,U\in {\mathcal G}_{k}(V)$ is equal to
$$k-\dim(S\cap U)=\dim(S+U)-k$$
and the diameter of $\Gamma_{k}$ is equal to $\min\{k,n-k\}$.

Let $V^{*}$ be the dual vector space. This is an $n$-dimensional left vector space over the opposite division ring $R^{*}$
(the division rings $R$ and $R^{*}$ have the same set of elements and the same additive operation, the multiplicative operation $*$ on $R^{*}$
is defined by the formula $a*b:=ba$ for all $a,b\in R$).
The second dual space $V^{**}$ is canonically isomorphic to $V$.

For a subset $X\subset V$ the subspace
$$X^{0}:=\{\;x^{*}\in V^{*}\;:\;x^{*}(x)=0\;\;\;\forall\; x\in X\;\}$$
is called the {\it annihilator} of $X$.
The dimension of $X^{0}$ is equal to the codimension of $\langle X\rangle$.
The annihilator mapping  of the set of all subspaces of $V$ to the set of all subspaces of $V^{*}$
is bijective and  reverses  the inclusion relation, i.e.
$$S\subset U\;\Longleftrightarrow\;U^{0}\subset S^{0}$$
for any subspaces $S,U\subset V$.
Since $S^{00}=S$ for every subspace $S\subset V$,
the inverse bijection is also the annihilator mapping.
The restriction of the annihilator mapping to each ${\mathcal G}_{k}(V)$ is an isomorphism of $\Gamma_{k}(V)$ to $\Gamma_{n-k}(V^{*})$.

\begin{lemma}\label{lemma2-1}
If $S_1,\dots,S_m$ are subspaces of $V$ then
$$(S_{1}+\dots+S_{m})^{0}=(S_{1})^{0}\cap\dots\cap (S_{m})^{0},$$
$$(S_{1}\cap\dots\cap S_{m})^{0}=(S_{1})^{0}+\dots+(S_{m})^{0}.$$
\end{lemma}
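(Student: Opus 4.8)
The plan is to prove Lemma~\ref{lemma2-1} for two subspaces first and then extend to arbitrarily many by induction, since the general formula follows formally once the case $m=2$ is settled. In fact, the cleanest route is to prove only the first identity directly from the definition of the annihilator, and then obtain the second identity ``for free'' by applying the first to the annihilators $S_1^0,\dots,S_m^0$ and using $S^{00}=S$.

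For the first identity with $m=2$, I would argue by double inclusion. If $x^*\in (S_1+S_2)^0$, then $x^*$ vanishes on every vector of $S_1+S_2$, hence in particular on every vector of $S_1$ and on every vector of $S_2$; thus $x^*\in S_1^0\cap S_2^0$. Conversely, if $x^*\in S_1^0\cap S_2^0$ and $x\in S_1+S_2$, write $x=s_1+s_2$ with $s_i\in S_i$; then $x^*(x)=x^*(s_1)+x^*(s_2)=0+0=0$, so $x^*\in (S_1+S_2)^0$. The passage from $m=2$ to general $m$ is a one-line induction using $S_1+\dots+S_m=(S_1+\dots+S_{m-1})+S_m$ together with associativity of intersection.

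For the second identity I would substitute: applying the (already proved) first identity to the subspaces $S_1^0,\dots,S_m^0\subset V^*$ gives
$$(S_1^0+\dots+S_m^0)^0 = S_1^{00}\cap\dots\cap S_m^{00} = S_1\cap\dots\cap S_m,$$
where I used $S_i^{00}=S_i$ and the canonical identification $V^{**}\cong V$. Taking annihilators of both sides and using $X^{00}=X$ once more yields
$$S_1^0+\dots+S_m^0 = (S_1\cap\dots\cap S_m)^0,$$
which is exactly the second identity.

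There is essentially no serious obstacle here: the only point that needs a little care is the bookkeeping with the opposite division ring $R^*$ and the canonical isomorphism $V^{**}\cong V$ when I dualize to deduce the second identity from the first, but this is routine given the properties of the annihilator mapping recalled just before the lemma. One could alternatively prove the second identity directly by double inclusion as well; the $\supset$ direction is immediate from inclusion-reversal, while $\subset$ requires observing that a functional vanishing on $S_1\cap\dots\cap S_m$ can be built from functionals each vanishing on a single $S_i$ --- which is exactly the kind of statement that is cleanest to obtain by the duality trick above rather than by a hands-on construction.
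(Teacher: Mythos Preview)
Your argument is correct and entirely standard. The paper itself states this lemma without proof, treating it as a well-known fact about annihilators in finite-dimensional vector spaces over division rings; your double-inclusion proof of the first identity together with the duality trick (apply the first identity to the $S_i^{0}$ and use $S^{00}=S$) is exactly the canonical way to justify it, and nothing more is needed.
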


Consider incident subspaces  $S\in {\mathcal G}_{s}(V)$ and $U\in {\mathcal G}_{u}(V)$ such that $s<k<u$.
We define
$$[S,U]_k:=\{\;P\in {\mathcal G}_{k}(V)\;:\;S\subset P\subset U\;\}.$$
In the case when $U=V$ or $S=0$, this subset will be denoted by $[S\rangle_{k}$ or $\langle U]_k$, respectively.
%Note that $[S,U]_k$ coincides with ${\mathcal G}_{k}(V)$ if $S=0$ and $U=V$.
Subsets of such type are called {\it parabolic subspace} of ${\mathcal G}_{k}(V)$,
see \cite[Section 3.1]{Pankov-book}.

There is the  natural isometric embedding $\Phi^{U}_{S}$ of $\Gamma_{k-s}(U/S)$ in
$\Gamma_{k}(V)$ which sends every $(k-s)$-dimensional subspace of $U/S$ to the corresponding $k$-dimensional subspace of $V$.
In the case when $U=V$ or $S=0$, this embedding will be denoted by $\Phi_{S}$ or $\Phi^{U}$, respectively.
The image of $\Phi^{U}_{S}$ is the parabolic subspace $[S,U]_k$.

If $k=1,n-1$ then any two distinct vertices of $\Gamma_{k}(V)$ are adjacent.
In the case when $1<k<n-1$, there are precisely the following two types of maximal cliques
of $\Gamma_{k}(V)$:
\begin{enumerate}
\item[$\bullet$] the {\it star} $[S\rangle_{k}$, $S\in {\mathcal G}_{k-1}(V)$,
\item[$\bullet$] the {\it top} $\langle U]_{k}$, $U\in {\mathcal G}_{k+1}(V)$.
\end{enumerate}
The annihilator mapping transfers every parabolic subspace $[S,U]_k$ to the parabolic subspace $[U^{0},S^{0}]_{n-k}$;
in particular, it sends stars to tops and tops to stars.

\subsection{Johnson graph}
The {\it Johnson graph} $J(n,k)$ is the graph whose vertices are $k$-element subsets of
$\{1,\dots,n\}$ and whose edges are pairs of $k$-element subsets with $(k-1)$-element intersections.
The graph  $J(n,k)$ is connected, the distance $d(X,Y)$ between two vertices $X,Y$ is equal to
$$k-|X\cap Y|=|X\cup Y|-k$$
and the diameter of  $J(n,k)$ is equal to $\min\{k,n-k\}$.
The mapping
$$
X\to X^{c}:=\{1,\dots,n\}\setminus X
$$
is an isomorphism between $J(n,k)$ and $J(n,n-k)$.

If $k=1,n-1$ then any two distinct vertices of $J(n,k)$ are adjacent.
In the case when $1<k<n-1$,
there are precisely the following two types of maximal cliques of $J(n,k)$:
\begin{enumerate}
\item[$\bullet$] the {\it star} which consists of all vertices containing a certain $(k-1)$-element subset,
\item[$\bullet$] the {\it top}  which consists of all vertices contained in a certain $(k+1)$-element subset.
\end{enumerate}
The stars and tops of $J(n,k)$ consist of $n-k+1$ and $k+1$ vertices, respectively.
The isomorphism $X\to X^{c}$ transfers stars to tops and tops to stars.

Let $B$ be a base of $V$.
The associated {\it apartment} of ${\mathcal G}_{k}(V)$
consists of all $k$-dimensional subspaces spanned by subsets of $B$.
This is the image of an isometric embedding of $J(n,k)$ in $\Gamma_{k}(V)$.
We will use the following facts:
\begin{enumerate}
\item[$\bullet$] for any two $k$-dimensional subspaces of $V$ there is an apartment of ${\mathcal G}_{k}(V)$ containing both of them;
\item[$\bullet$] the annihilator mapping of ${\mathcal G}_{k}(V)$ to ${\mathcal G}_{n-k}(V^{*})$
transfers apartments to apartments.
\end{enumerate}

Let $[S,U]_k$ be a parabolic subspace of ${\mathcal G}_{k}(V)$.
Let also $B$ be a base of $V$ such that $S$ and $U$ are spanned by subsets of $B$.
The intersection of the corresponding apartment of ${\mathcal G}_{k}(V)$ with $[S,U]_k$ is said to be an {\it apartment} in the parabolic
subspace $[S,U]_k$.
This is the image of an isometric embedding of $J(u-s,k-s)$ in $\Gamma_{k}(V)$,
where $s=\dim S$ and $u=\dim U$. The mapping $\Phi^{U}_{S}$
establishes a one-to-one correspondence between apartments of ${\mathcal G}_{k-s}(U/S)$ and apartments
of the parabolic subspace $[S,U]_k$.

\subsection{Isometric embeddings of Johnson graphs in Grassmann graphs}
Let $V'$ be an $n'$-dimensional left vector space over a division ring $R'$.
Isometric embeddings of $J(n,k)$ in $\Gamma_{k'}(V')$ are classified in \cite{Pankov1}.
The existence of such embeddings implies that
the diameter of $J(n,k)$ is not greater than the diameter of  $\Gamma_{k'}(V')$, i.e.
\begin{equation}\label{eq2-1}
\min\{k,n-k\}\le\min\{k',n'-k'\}.
\end{equation}
Since $J(n,k)$ and $J(n,n-k)$ are isomorphic, we can suppose that $k\le n-k$. Then
$$k\le \min\{k',n-k,n'-k'\}.$$
The case $k=1$ is trivial: any two distinct vertices of $J(n,1)$ are adjacent and
every isometric embedding of $J(n,1)$ in $\Gamma_{k'}(V')$ is a bijection to a clique of $\Gamma_{k'}(V')$.

We say that a subset $X\subset V$ is $m$-{\it independent} if every $m$-element subset of $X$ is independent.
If $x_{1},\dots,x_{m}$ are linearly independent vectors of $V$ and
$$x_{m+1}=a_{1}x_{1}+\dots+a_{m}x_{m},$$
where each $a_{i}$ is non-zero, then $x_{1},\dots,x_{m+1}$ form an $m$-independent subset.
Every $n$-independent subset of $V$ consisting of $n$ vectors is a base of $V$.
By \cite[Proposition 1]{Pankov1}, if the division ring $R$ is infinite then for every natural integer $l\ge n$
there is an $n$-independent subset of $V$ consisting of $l$ vectors.

Suppose that $k< n-k$ and $X$ is a $(2k)$-independent subset of $V$ consisting of $l$ vectors.
Every $k$-element subset of $X$ spans a $k$-dimensional subspace and we denote by
${\mathcal J}_{k}(X)$ the set formed by all such subspaces.
This is the image of an isometric embedding of $J(l,k)$ in $\Gamma_{k}(V)$.
We will write ${\mathcal J}^{*}_{k}(X)$ for the subset of ${\mathcal G}_{n-k}(V^{*})$ consisting of the annihilators of elements from ${\mathcal J}_{k}(X)$.
If $X$ is a base of $V$
then ${\mathcal J}_{k}(X)$ and ${\mathcal J}^{*}_{k}(X)$ are apartments of ${\mathcal G}_{k}(V)$ and ${\mathcal G}_{n-k}(V^{*})$,
respectively.

The images of isometric embeddings of $J(n,k)$ in $\Gamma_{k'}(V')$
are called $J(n,k)$-{\it subsets} of ${\mathcal G}_{k'}(V')$.

\begin{theorem}[\cite{Pankov1}]\label{theorem2-1}
Let ${\mathcal J}$ be a $J(n,k)$-subset of ${\mathcal G}_{k'}(V')$ and $1<k\le n-k$.
In the case when $n=2k$, there exist $S\in {\mathcal G}_{k'-k}(V')$ and $U\in {\mathcal G}_{k'+k}(V')$
such that ${\mathcal J}$ is an apartment in the parabolic subspace $[S,U]_{k'}$, i.e.
$${\mathcal J}=\Phi^{U}_{S}({\mathcal A}),$$
where ${\mathcal A}$ is an apartment of ${\mathcal G}_{k}(U/S)$.
If $k<n-k$ then one of the following possibilities is realized:
\begin{enumerate}
\item[(1)] there exist $S\in {\mathcal G}_{k'-k}(V')$ and a $(2k)$-independent $n$-element subset $X\subset V'/S$
such that
$${\mathcal J}=\Phi_{S}({\mathcal J}_{k}(X));$$
\item[(2)] there exist  $U\in {\mathcal G}_{k'+k}(V')$ and
a $(2k)$-independent $n$-element subset $Y\subset U^{*}$  such that
$${\mathcal J}=\Phi^{U}({\mathcal J}^{*}_{k}(Y)).$$
\end{enumerate}
\end{theorem}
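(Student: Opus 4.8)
\emph{The plan.} I would argue by induction on $k$, treating the base case $k=2$ directly (it already contains the instance $n=2k$ for $k=2$, since $2=n-k$ there). Fix an isometric embedding $f$ of $J(n,k)$ in $\Gamma_{k'}(V')$ with image ${\mathcal J}$, where $1<k\le n-k$. As $f$ preserves and reflects adjacency, it carries each maximal clique of $J(n,k)$ — a star or a top, each with at least $k+1\ge 3$ vertices — to a clique of $\Gamma_{k'}(V')$, hence into a star or into a top. The first real step is to pin down a \emph{type function}. A star $\mathcal S_{A}$ (center a $(k-1)$-element subset $A$) and a top $\mathcal T_{C}$ (center a $(k+1)$-element subset $C$) with $A\subset C$ share exactly two vertices, whose images are adjacent; since two distinct vertices of $\Gamma_{k'}(V')$ lie in a unique star and in a unique top, if $f(\mathcal S_{A})$ and $f(\mathcal T_{C})$ were simultaneously contained in stars, or simultaneously in tops, then $f(\mathcal S_{A}\cup\mathcal T_{C})$ would be a clique — impossible, because $\mathcal S_{A}\cup\mathcal T_{C}$ is not a clique in $J(n,k)$. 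The same remark excludes that a clique-image is a line $[M,N]_{k'}$, so ``star''/``top'' is a well-defined label on maximal cliques, incident stars and tops get opposite labels, and — joining any two $(k-1)$-subsets by a chain in which consecutive members have $(k-2)$-element intersection, hence lie under a common top — all stars get one label and all tops the other. Composing $f$ with the annihilator isomorphism ${\mathcal G}_{k'}(V')\to{\mathcal G}_{n'-k'}(V'^{*})$ interchanges stars and tops, hence the two labelings, and a direct annihilator computation shows it interchanges conclusions $(1)$ and $(2)$; when $n=2k$ these coincide with the stated assertion. So I may assume $f$ is of \emph{star type}: stars go into stars, tops into tops.

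\emph{Reduction to the non-degenerate case.} Put $S:=\bigcap_{X}f(X)$, the intersection over all vertices; then $S\subset f(X)$ for every $X$, so $f$ induces an isometric embedding $\bar f$ of $J(n,k)$ in $\Gamma_{m}(W)$ with $W:=V'/S$ and $m:=k'-\dim S$, again of star type and with $\bigcap_{X}\bar f(X)=0$, and $f=\Phi_{S}\circ\bar f$; it suffices to analyse $\bar f$. For each index $i$, the vertices containing $i$ form a copy of $J(n-1,k-1)$, and since $n\ge 2k$ we have $n-1>2(k-1)$, so the restriction of $\bar f$ to this copy is an isometric embedding of $J(n-1,k-1)$ of star type, which by the inductive hypothesis equals $\Phi_{S_{i}}({\mathcal J}_{k-1}(X_{i}))$ for some $S_{i}\in{\mathcal G}_{m-k+1}(W)$ and a $(2k-2)$-independent $(n-1)$-element $X_{i}\subset W/S_{i}$. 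Thus $S_{i}=\bigcap_{X\ni i}\bar f(X)$ has dimension $m-k+1$. (For $k=2$ this is immediate: the vertices through $i$ are a star of $J(n,2)$.)

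\emph{The key equality $m=k$, and the conclusion.} Choosing two disjoint vertices — possible as $n\ge 2k$ — which lie at distance $k$ forces $S_{i}\ne S_{j}$ for $i\ne j$; applying the inductive hypothesis to the copy of $J(n-2,k-2)$ of vertices containing both $i$ and $j$ (trivially, if $k\le 3$, these form a star) gives that $\bigcap_{X\supset\{i,j\}}\bar f(X)$ has dimension $m-k+2$, whence $\dim(S_{i}\cap S_{j})=m-k$. On the other hand, given a vertex $X$ with $i,j\notin X$, replacing one element of $X$ by $i$, respectively by $j$, produces two vertices lying in a common star $\mathcal S_{A}$ of $J(n,k)$; since $\bar f(\mathcal S_{A})$ lies in a star $[M_{A}\rangle_{m}$ and these two images meet exactly in $M_{A}$, we obtain $S_{i}\cap S_{j}\subset M_{A}\subset\bar f(X)$. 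Hence $S_{i}\cap S_{j}\subset\bigcap_{X}\bar f(X)=0$, so $m=k$ and $\dim S_{i}=1$. Picking $0\ne x_{i}\in S_{i}$ and using the description of the restrictions above (together with $S_{i}\ne S_{j}$), one identifies the ``$j$-direction'' of ${\mathcal J}_{k-1}(X_{i})$ with the image of $x_{j}$, so that $\bar f(X)=\langle x_{i}:i\in X\rangle$ for every vertex $X$; splitting $2k$ indices into two disjoint $k$-sets (vertices at distance $k$, with images meeting trivially) then shows that $\{x_{1},\dots,x_{n}\}$ is $(2k)$-independent. Therefore ${\mathcal J}=\Phi_{S}({\mathcal J}_{k}(\{x_{1},\dots,x_{n}\}))$, which is conclusion $(1)$; when $n=2k$ the $x_{i}$ form a base of a $2k$-dimensional subspace of $W$, whose preimage $U\subset V'$ satisfies $\dim U=k'+k$, and then ${\mathcal J}=\Phi^{U}_{S}({\mathcal A})$ with ${\mathcal A}$ the corresponding apartment of ${\mathcal G}_{k}(U/S)$.

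\emph{Where the difficulty lies.} The clique bookkeeping of the first paragraph and the passage to $\bar f$ are routine; the genuine obstacle is the equality $m=k$, equivalently $\dim S=k'-k$, in the last paragraph. Unlike everything else it cannot be obtained from adjacency-preservation alone: it seems to require using the distance formula in $\Gamma_{k'}(V')$, the star-type property, and the inductive description of the residues $J(n-1,k-1)$ and $J(n-2,k-2)$ simultaneously. A lesser nuisance is making the type function genuinely well defined — ruling out line-shaped clique-images — and matching the ``top-type'' alternative with conclusion $(2)$ via the annihilator map.
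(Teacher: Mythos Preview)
This theorem is not proved in the present paper; it is simply quoted from \cite{Pankov1}, so there is no in-paper argument to compare your proposal against.

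Your outline is nonetheless a correct proof. The clique bookkeeping of the first paragraph is right, including the observation that a line-shaped image $[M,N]_{k'}$ is impossible (otherwise every incident top would be forced into the same star \emph{and} the same top). The inductive engine forcing $m=k$ is sound: the $J(n-1,k-1)$-residue at $i$ gives $\dim S_i=m-k+1$; the $J(n-2,k-2)$-residue at $\{i,j\}$ (or the direct star argument for $k\le 3$) gives $\dim\bigl(\bigcap_{X\supset\{i,j\}}\bar f(X)\bigr)=m-k+2$, whence $\dim(S_i\cap S_j)=m-k$; and your star trick with $M_A$ then places $S_i\cap S_j$ inside the global intersection, which is $0$ after the reduction. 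Two points deserve to be made explicit. First, when you say the $J(n-1,k-1)$-residue ``of star type'' is $\Phi_{S_i}(\mathcal J_{k-1}(X_i))$, you are using that star type forces first type; that is precisely Lemma~\ref{lemma2-2} (the star/top cardinalities distinguish the two types), and you should invoke it. Second, the identification $\bar f(X)=\langle x_i:i\in X\rangle$ is only sketched; the cleanest way to finish is to note that once $m=k$ one has $\bigcap_{X\supset\{i,j\}}\bar f(X)=\langle x_i,x_j\rangle$ (it has dimension $2$ and contains both vectors, which are independent since $S_i\ne S_j$), so the ``$j$-direction'' of $X_i$ in $W/\langle x_i\rangle$ is $x_j+\langle x_i\rangle$, and then the inductive description of the residue yields $\bar f(X)=\langle x_j:j\in X\rangle$ for every $X$ containing $i$, hence for every $X$. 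The $(2k)$-independence then follows exactly as you say.
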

In the case when $1<k<n-k$, we say that ${\mathcal J}$ is a $J(n,k)$-subset of {\it first} or of {\it second type} if the corresponding
possibility is realized.
The annihilator mapping changes types of $J(n,k)$-subsets.

\begin{rem}\label{rem2-1}{\rm
Suppose that  $1<k<n-k$ and ${\mathcal J}\subset {\mathcal G}_{k'}(V')$ is a $J(n,k)$-subset of second type.
Let $U$ and $Y$ be  as in Theorem \ref{theorem2-1}.
The annihilators of vectors belonging to $Y$ form an $n$-element subset ${\mathcal Y}\subset{\mathcal G}_{k'+k-1}(U)$.
Every element of ${\mathcal J}$ can be presented as the intersection of $k$ distinct elements of ${\mathcal Y}$.
}\end{rem}

Let ${\mathcal C}$ be a maximal clique of $\Gamma_{k'}(V')$ (a star or a top).
As above, we suppose that ${\mathcal J}$ is a $J(n,k)$-subset of ${\mathcal G}_{k'}(V')$ and $1<k\le n-k$.
If ${\mathcal J}\cap {\mathcal C}$ contains more than one element than
it is a maximal clique of the restriction of $\Gamma_{k'}(V')$ to ${\mathcal J}$
(this restriction is isomorphic to $J(n,k)$). In this case,
we say that ${\mathcal J}\cap {\mathcal C}$ is a {\it star} or a {\it top} of ${\mathcal J}$
if ${\mathcal C}$ is a star or a top, respectively.

\begin{lemma}\label{lemma2-2}
Suppose that $1<k<n-k$.
If ${\mathcal J}$ is a $J(n,k)$-subset of first type then the stars and tops of ${\mathcal J}$ consist of $n-k+1$ and $k+1$ vertices, respectively.
In the case when ${\mathcal J}$ is a $J(n,k)$-subset of second type, the stars and tops of ${\mathcal J}$ consist of $k+1$ and $n-k+1$ vertices, respectively.
\end{lemma}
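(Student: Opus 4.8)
The plan is to handle the two types of $J(n,k)$-subsets separately: a first-type subset by an explicit count carried out inside the quotient space $V'/S$, and a second-type subset by transporting it to a first-type subset via the annihilator mapping. At the outset note that $1<k\le n-k$ together with \eqref{eq2-1} forces $k'\ge k\ge 2$ and $n'-k'\ge k\ge 2$; in particular $1<k'<n'-1$, so the maximal cliques of $\Gamma_{k'}(V')$ are precisely its stars and tops.

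Assume first that ${\mathcal J}=\Phi_{S}({\mathcal J}_{k}(X))$ is of first type, with $S\in{\mathcal G}_{k'-k}(V')$ and $X$ a $(2k)$-independent $n$-element subset of $W:=V'/S$; then $\dim W=n'-k'+k$, so $1<k<\dim W-1$. Recall that $\Phi_{S}$ is an isomorphism of $\Gamma_{k}(W)$ onto the restriction of $\Gamma_{k'}(V')$ to the star $[S\rangle_{k'}$ and that ${\mathcal J}\subseteq[S\rangle_{k'}$. The first step is to match cliques through $\Phi_{S}$. If a star $[S_{0}\rangle_{k'}$ of $\Gamma_{k'}(V')$ meets ${\mathcal J}$ in two distinct, hence adjacent, vertices $P_{1},P_{2}$, then $S_{0}=P_{1}\cap P_{2}\supseteq S$, so $[S_{0}\rangle_{k'}\cap{\mathcal J}=\Phi_{S}\bigl([S_{0}/S\rangle_{k}\cap{\mathcal J}_{k}(X)\bigr)$, where $[S_{0}/S\rangle_{k}$ is a star of $\Gamma_{k}(W)$; and if a top $\langle U_{0}]_{k'}$ meets ${\mathcal J}$ in two vertices $P_{1},P_{2}$, then $U_{0}=P_{1}+P_{2}\supseteq S$, so $\langle U_{0}]_{k'}\cap{\mathcal J}=\Phi_{S}\bigl(\langle U_{0}/S]_{k}\cap{\mathcal J}_{k}(X)\bigr)$, where $\langle U_{0}/S]_{k}$ is a top of $\Gamma_{k}(W)$. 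Since $1<k<\dim W-1$, these $[S_{0}/S\rangle_{k}$ and $\langle U_{0}/S]_{k}$ are maximal cliques of $\Gamma_{k}(W)$ (and conversely, every star or top of $\Gamma_{k}(W)$ yields one of ${\mathcal J}$ through $\Phi_{S}$). Hence it is enough to count the stars and tops of ${\mathcal J}_{k}(X)$.

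For this, let $A_{1},A_{2}\subseteq X$ with $|A_{i}|=k$ and $\langle A_{1}\rangle\neq\langle A_{2}\rangle$ adjacent. Since $|A_{1}\cup A_{2}|\le 2k$, the $(2k)$-independence of $X$ gives $\dim\langle A_{1}\cup A_{2}\rangle=|A_{1}\cup A_{2}|$; as $\langle A_{1}\rangle+\langle A_{2}\rangle=\langle A_{1}\cup A_{2}\rangle$ has dimension $k+1$, we get $|A_{1}\cap A_{2}|=k-1$. If $\langle A_{1}\rangle,\langle A_{2}\rangle$ lie in a common star of $\Gamma_{k}(W)$, that star is $[T\rangle_{k}$ with $T:=\langle A_{1}\cap A_{2}\rangle$, and a short argument using $(2k)$-independence shows that, for a $k$-element $A'\subseteq X$, one has $T\subseteq\langle A'\rangle$ iff $A_{1}\cap A_{2}\subseteq A'$. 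Thus this star equals $\{\langle(A_{1}\cap A_{2})\cup\{x\}\rangle:x\in X\setminus(A_{1}\cap A_{2})\}$ and has $n-(k-1)=n-k+1$ vertices. Dually, if $\langle A_{1}\rangle,\langle A_{2}\rangle$ lie in a common top of $\Gamma_{k}(W)$, that top is $\langle Z]_{k}$ with $Z:=\langle A_{1}\cup A_{2}\rangle$, and $\langle A'\rangle\subseteq Z$ iff $A'\subseteq A_{1}\cup A_{2}$ (here one uses $k+2\le 2k$); so this top equals $\{\langle A'\rangle:A'\subseteq A_{1}\cup A_{2},\ |A'|=k\}$ and has $\binom{k+1}{k}=k+1$ vertices. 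This proves the assertion for the first type.

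Finally, let ${\mathcal J}$ be of second type and let $\alpha\colon{\mathcal G}_{k'}(V')\to{\mathcal G}_{n'-k'}(V'^{*})$ be the annihilator mapping; it is an isomorphism of $\Gamma_{k'}(V')$ onto $\Gamma_{n'-k'}(V'^{*})$ which sends stars to tops, tops to stars, and changes the type of every $J(n,k)$-subset. Hence $\alpha({\mathcal J})$ is a $J(n,k)$-subset of \emph{first} type of ${\mathcal G}_{n'-k'}(V'^{*})$, and since $1<k<n-k$ and $1<n'-k'<n'-1$, the first part applies to it. A star of ${\mathcal J}$ is carried by $\alpha$ to a top of $\alpha({\mathcal J})$ with the same number of vertices, namely $k+1$; symmetrically, a top of ${\mathcal J}$ is carried to a star of $\alpha({\mathcal J})$, with $n-k+1$ vertices. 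This is the assertion for the second type. The part of the argument that needs the most care is the first step of the first-type case --- deciding which geometric cliques of $\Gamma_{k'}(V')$ restrict through $\Phi_{S}$ to stars and which to tops of $\Gamma_{k}(W)$, which is exactly where the inequalities $k'\ge 2$ and $n'-k'\ge 2$ enter; the two containment equivalences for $k$-subsets of $X$ are then routine consequences of $(2k)$-independence.
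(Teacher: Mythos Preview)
Your argument is correct. The paper's own proof is simply ``Easy verification,'' and what you have written is exactly the kind of direct verification the paper has in mind: pull back through $\Phi_{S}$ and count using $(2k)$-independence for the first type, then use that the annihilator swaps stars with tops and changes the type of $J(n,k)$-subsets for the second type.
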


\begin{proof}
Easy verification.
\end{proof}

Lemma \ref{lemma2-2} shows that the two above determined classes of $J(n,k)$-subsets are disjoint.

\subsection{Isometric embeddings of Grassmann graphs}
Isometric embeddings of $\Gamma_{k}(V)$ in $\Gamma_{k'}(V')$ are classified in \cite{Pankov2}.
As in the previous subsection, we have \eqref{eq2-1} which implies that the diameter of $\Gamma_{k}(V)$ is not greater than the diameter of $\Gamma_{k'}(V')$.

A mapping $l:V\to V'$ is called {\it semilinear} if
$$l(x+y)=l(x)+l(y)$$
for all $x,y\in V$ and there is a homomorphism $\sigma:R\to R'$ such that
$$l(ax)=\sigma(a)l(x)$$
for all $a\in R$ and $x\in V$.
If $l$ is non-zero then  there is only one homomorphism $\sigma$ satisfying this condition.
Every non-zero homomorphism of $R$ to $R'$ is injective.

A semilinear injection of $V$ to $V'$ is said to be a {\it semilinear $m$-embedding} if
it transfers any $m$ linearly independent vectors to linearly independent vectors.
The existence of such mappings implies that $m\le n'$.
A semilinear $n$-embedding of $V$ in $V'$ will be called a {\it semilinear embedding}.
It maps every independent subset to an independent subset
which means that $n\le n'$.
For any natural integers $p\ge 3$ and $q$
there is a semilinear $p$-embed\-ding of a $(p+q)$-dimensional vector space
which is not a $(p+1)$-embedding \cite{Kreuzer1}.

Let $l:V\to V'$ be a semilinear $m$-embedding.
If $P$ is a $k$-dimensional subspace of $V$ and $k\le m$ then $\langle l(P)\rangle$
is a $k$-dimensional subspace of $V'$.
So, for every $k\in \{1,\dots,m\}$ we have the mapping
$$(l)_{k}:{\mathcal G}_{k}(V)\to {\mathcal G}_{k}(V')$$
$$P\to \langle l(P)\rangle$$
and the mapping
$$(l)^{*}_{k}:{\mathcal G}_{k}(V)\to {\mathcal G}_{n'-k}(V'^{*})$$
$$P\to \langle l(P)\rangle^{0}.$$
In the case when $k<m$, these mappings are injective and transfer adjacent subspaces to adjacent subspaces.
If $2k\le m$ then $(l)_{k}$ and $(l)^{*}_{k}$ are isometric embeddings of $\Gamma_{k}(V)$
in $\Gamma_{k}(V')$ and $\Gamma_{n'-k}(V'^{*})$, respectively.

\begin{theorem}[\cite{Pankov2}]\label{theorem2-2}
Let $f$ be an isometric embedding of $\Gamma_{k}(V)$ in $\Gamma_{k'}(V')$ and $1<k\le n-k$.
Then one of the following possibilities is realized:
\begin{enumerate}
\item[{\rm (1)}]
there exist $S\in {\mathcal G}_{k'-k}(V')$ and a semilinear $(2k)$-embedding $l:V\to V'/S$
such that $f=\Phi_{S}\circ(l)_{k}$;
\item[{\rm (2)}]
there exist $U\in {\mathcal G}_{k'+k}(V')$ and a semilinear  $(2k)$-embedding $s:V\to U^{*}$ such that
$f=\Phi^{U}\circ(s)^{*}_{k}$.
\end{enumerate}
In particular, if $n=2k$ then there exist incident $S\in {\mathcal G}_{k'-k}(V')$ and $U\in {\mathcal G}_{k'+k}(V')$
such that $f$ is induced by a semilinear embedding $l: V\to U/S$ or a semilinear embedding $s:V\to(U/S)^{*}$, i.e.
$$f=\Phi^{U}_{S}\circ (l)_k\;\mbox{ or }\;f=\Phi^{U}_{S}\circ (s)^{*}_k.$$
\end{theorem}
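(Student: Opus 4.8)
The plan is to deduce the structure of $f$ from the classification of $J(n,k)$-subsets (Theorem \ref{theorem2-1}), applied to the $f$-images of apartments, after first normalizing how $f$ acts on maximal cliques. Observe that the annihilator mapping ${\mathcal G}_{k'}(V')\to{\mathcal G}_{n'-k'}(V'^{*})$ is an isomorphism of $\Gamma_{k'}(V')$ onto $\Gamma_{n'-k'}(V'^{*})$ sending $\Phi_{S}\circ(l)_{k}$ to $\Phi^{S^{0}}\circ(l)^{*}_{k}$; it therefore interchanges possibilities (1) and (2), so it is enough to obtain one of them after a suitable normalization.

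\emph{Step 1 (maximal cliques and the dichotomy).} As $f$ is an isometric embedding, the restriction of $\Gamma_{k'}(V')$ to ${\mathcal X}:=f({\mathcal G}_{k}(V))$ is isomorphic to $\Gamma_{k}(V)$, so $f$ carries each maximal clique of $\Gamma_{k}(V)$ onto a maximal clique of this restriction; having at least three vertices, such a clique lies in a maximal clique of $\Gamma_{k'}(V')$. One checks that this ambient maximal clique is unique (care is needed precisely when a three-element clique of $\Gamma_{k'}(V')$ sits simultaneously in a star and a top, i.e.\ inside some $[S',U']_{k'}$ with $\dim U'/S'=2$) and then that the assignment is rigid: two distinct stars, or two distinct tops, of $\Gamma_{k}(V)$ meet in at most one vertex, whereas a star and a top may meet in a line $[S,U]_{k}$, and by comparing, for pairs of maximal cliques of $\Gamma_{k}(V)$, whether they share a vertex with the same question for their images, one forces that either $f$ sends all stars into stars and all tops into tops, or it sends all stars into tops and all tops into stars. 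This dichotomy is the first place I expect real work. After composing with the annihilator if necessary, assume henceforth that $f$ sends stars into stars and tops into tops, and aim at possibility (1).

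\emph{Step 2 (factoring out $\Phi_{S}$).} For a base $B$ of $V$ let ${\mathcal A}_{B}$ be the associated apartment of ${\mathcal G}_{k}(V)$. The restriction of $\Gamma_{k}(V)$ to ${\mathcal A}_{B}$ is isomorphic to $J(n,k)$, hence $f({\mathcal A}_{B})$ is a $J(n,k)$-subset; by Step 1 its stars have $n-k+1$ vertices, so (for $1<k<n-k$) Lemma \ref{lemma2-2} together with Theorem \ref{theorem2-1} yields $S_{B}\in{\mathcal G}_{k'-k}(V')$ and a $(2k)$-independent $n$-element set $X_{B}\subset V'/S_{B}$ with $f({\mathcal A}_{B})=\Phi_{S_{B}}({\mathcal J}_{k}(X_{B}))$, while for $n=2k$ Theorem \ref{theorem2-1} gives the same with $X_{B}$ a base of a $2k$-dimensional subspace of $V'/S_{B}$. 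I would recover $S_{B}$ canonically as $\bigcap\{P:P\in f({\mathcal A}_{B})\}$ and then show it does not depend on $B$: two bases differing in a single vector have apartments whose common part contains a ${\mathcal J}_{k}$-set on $n-1$ vectors, and applying Theorem \ref{theorem2-1} to that sub-Johnson graph identifies the intersection of the corresponding $f$-images with both $S_{B}$ and $S_{B'}$; since any two bases are joined by a chain of single-vector changes, all $S_{B}$ coincide. Writing $S$ for this common value, $f({\mathcal G}_{k}(V))\subseteq[S\rangle_{k'}$ and $f=\Phi_{S}\circ g$, where $g:\Gamma_{k}(V)\to\Gamma_{k}(W)$, $W:=V'/S$, is an isometric embedding with $\dim W\ge 2k$, still sending stars to stars and tops to tops, and mapping every apartment of ${\mathcal G}_{k}(V)$ onto a set ${\mathcal J}_{k}(X)$.

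\emph{Step 3 (reconstructing a semilinear $(2k)$-embedding).} It remains to produce a semilinear $(2k)$-embedding $l:V\to W$ with $g=(l)_{k}$. One natural route is to restrict $g$ to a star $[T\rangle_{k}$ with $T\in{\mathcal G}_{k-1}(V)$; its image lies in a star $[T'\rangle_{k}$, $T'\in{\mathcal G}_{k-1}(W)$, and under the identifications $[T\rangle_{k}\cong{\mathcal G}_{1}(V/T)$, $[T'\rangle_{k}\cong{\mathcal G}_{1}(W/T')$ — the lines of ${\mathcal G}_{1}(V/T)$ being the sets $[T,U]_{k}$, which $g$ carries into lines $[T',U'']_{k}$ because $g$ respects tops — this restriction is a line-preserving injection whose image is not contained in a line (as $\dim(V/T)\ge 3$), so a version of the fundamental theorem of projective geometry for embeddings (\cite{Pankov-book}) supplies a semilinear injection $V/T\to W/T'$ inducing it. Matching these local maps over all such $T$ — along flags, or via the fact that $g$ restricted to apartments recovers the corresponding bases up to scalars — one assembles a single semilinear injection $l:V\to W$ with $g=(l)_{k}$. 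Finally $l$ is a $(2k)$-embedding: any $2k$ linearly independent vectors of $V$ extend to a base (if $n=2k$) or to a $(2k)$-independent $n$-element set, whose apartment $g$ sends to a ${\mathcal J}_{k}$-set, which forces the $2k$ images under $l$ to be independent. When $n=2k$ this makes $l$ a semilinear isomorphism of $V$ onto a $2k$-dimensional subspace $U/S$ of $W$ with $U\in{\mathcal G}_{k'+k}(V')$, and $f=\Phi^{U}_{S}\circ(l)_{k}$; the other alternative (and case (2), and the possibility $V\to(U/S)^{*}$ when $n=2k$) comes from undoing the normalization of Step 1 with the annihilator. Apart from the dichotomy of Step 1, the principal obstacle is this gluing: arranging the normalizations of the local semilinear maps so that their ring homomorphisms agree and they are restrictions of one global map.
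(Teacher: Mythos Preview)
The statement you are proving, Theorem~\ref{theorem2-2}, is not proved in this paper: it is quoted from \cite{Pankov2} and used here as input for the main result (Theorem~\ref{theorem-main}) and its corollaries. Consequently there is no ``paper's own proof'' to compare your attempt against.

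That said, your outline is a plausible route to the result, and in fact parts of it mirror arguments the paper does carry out for its own Theorem~\ref{theorem-main}. Your Step~2 idea---recovering $S$ as the intersection of all members of $f({\mathcal A}_{B})$ and then showing $S$ is independent of $B$ via a chain of one-vector base changes---is close in spirit to Lemmas~\ref{lemma5-1} and~\ref{lemma5-2} and Proposition~\ref{prop4-1}, though the paper uses a finer count (the quantity $a(n,k)$ and the notion of special subsets) rather than a direct intersection argument. Your Step~1 dichotomy is exactly the kind of ``stars to stars or stars to tops'' argument one expects; the delicate point you flag (a three-element clique lying simultaneously in a star and a top) is real and must be handled.

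The genuine gap in your proposal is Step~3. Reducing to local semilinear maps on stars via the fundamental theorem is standard, but the gluing is where the content lies: you must show the associated ring homomorphisms agree and the local maps patch to a global $l:V\to W$. Your sketch (``matching these local maps over all such $T$ --- along flags'') gestures at this but does not supply the mechanism. This is precisely the part that \cite{Pankov2} presumably works out, and it is not something that can be waved through. If you want to make this self-contained, you will need either a careful flag-by-flag coherence argument or a direct construction of $l$ on vectors (e.g.\ by fixing images of a base and checking additivity via the action on lines in ${\mathcal G}_{1}$), together with the verification that $l$ is a $(2k)$-embedding.
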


The case $k=1,n-1$ is trivial. The case when $1<k\le n-k$ is considered in Theorem \ref{theorem2-2}.
Suppose that $n-k<k<n-1$.
Since $\Gamma_{k}(V)$ and $\Gamma_{n-k}(V^{*})$ are canonically isomorphic,
every isometric embedding of $\Gamma_{k}(V)$ in $\Gamma_{k'}(V')$
can be considered as an isometric embedding of $\Gamma_{n-k}(V^{*})$ in $\Gamma_{k'}(V')$.
The latter embedding is one of the mappings described in Theorem \ref{theorem2-2}.
In contrast to the case when $1<k\le n-k$, we can not show that isometric embeddings of $\Gamma_{k}(V)$ in $\Gamma_{k'}(V')$
are defined by semilinear mappings of $V$.

\section{Main result}
Let $f$ be a mapping of ${\mathcal G}_{k}(V)$ to ${\mathcal G}_{k'}(V')$.
If the restriction of $f$ to every apartment of ${\mathcal G}_{k}(V)$ is an isometric embedding of $J(n,k)$ in $\Gamma_{k'}(V')$
then $f$ is an isometric embedding of $\Gamma_{k}(V)$ in $\Gamma_{k'}(V')$.
This follows from the fact that for any two elements of ${\mathcal G}_{k}(V)$ there is an apartment containing both of them.

We say that $f$ is a $J$-{\it mapping}
if it sends every apartment of ${\mathcal G}_{k}(V)$ to
a $J(n,k)$-subset.
Every isometric embedding of $\Gamma_{k}(V)$ in $\Gamma_{k'}(V')$ satisfies this condition.
Our main result states that this property characterizes isometric embeddings of $\Gamma_{k}(V)$ in $\Gamma_{k'}(V')$.

\begin{theorem}\label{theorem-main}
Every $J$-mapping of ${\mathcal G}_{k}(V)$ to ${\mathcal G}_{k'}(V')$
is an isometric embedding of $\Gamma_{k}(V)$ in $\Gamma_{k'}(V')$.
\end{theorem}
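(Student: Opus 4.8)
I would begin with the degenerate cases and reductions. If $k=1$ or $k=n-1$, then any two distinct vertices of $\Gamma_{k}(V)$ are adjacent, an apartment of ${\mathcal G}_{k}(V)$ has $n$ elements, and a $J(n,k)$-subset of ${\mathcal G}_{k'}(V')$ is an $n$-element clique; hence the restriction of a $J$-mapping $f$ to any apartment is a bijection onto a clique, thus an isometric embedding of $J(n,k)$, and consequently $f$ itself is an isometric embedding, since any two vertices lie in a common apartment. So assume $1<k<n-1$. The annihilator mapping ${\mathcal G}_{k}(V)\to{\mathcal G}_{n-k}(V^{*})$ is an isomorphism of $\Gamma_{k}(V)$ onto $\Gamma_{n-k}(V^{*})$ transferring apartments to apartments, and a $J(n,k)$-subset of ${\mathcal G}_{k'}(V')$ is the same thing as a $J(n,n-k)$-subset; composing $f$ with this isomorphism, we may assume $1<k\le n-k$.

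Next, for every apartment ${\mathcal A}$ of ${\mathcal G}_{k}(V)$ the image $f({\mathcal A})$ is a $J(n,k)$-subset, hence has exactly $\binom{n}{k}$ elements (isometric embeddings of $J(n,k)$ are injective); as ${\mathcal A}$ also has $\binom{n}{k}<\infty$ elements, $f|_{{\mathcal A}}\colon{\mathcal A}\to f({\mathcal A})$ is a bijection, and therefore $f$ is injective, because any two distinct $k$-dimensional subspaces lie in a common apartment. Both ${\mathcal A}$ and $f({\mathcal A})$, with the graph structures induced from the ambient Grassmann graphs, are isomorphic to $J(n,k)$, so they are finite graphs with the same number of edges; hence if $f|_{{\mathcal A}}$ sends edges to edges then it also sends non-edges to non-edges, i.e.\ it is a graph isomorphism, and so an isometric embedding of $J(n,k)$ in $\Gamma_{k'}(V')$ (distances between elements of a $J(n,k)$-subset agree with distances in $\Gamma_{k'}(V')$). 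Since every pair of vertices of $\Gamma_{k}(V)$ lies in a common apartment, it follows, as observed at the beginning of this section, that $f$ is an isometric embedding of $\Gamma_{k}(V)$ in $\Gamma_{k'}(V')$. As adjacent vertices of ${\mathcal A}$ always lie in a common maximal clique of ${\mathcal A}$, the statement is thus reduced to the assertion $(\ast)$: \emph{for every apartment ${\mathcal A}$ of ${\mathcal G}_{k}(V)$, the bijection $f|_{{\mathcal A}}$ transfers every maximal clique of ${\mathcal A}$ onto a clique of $\Gamma_{k'}(V')$.}

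It suffices to prove $(\ast)$ for the stars of ${\mathcal A}$, the case of tops being analogous (with $k+1$ replacing $n-k+1$). Fix a base $B$ of $V$, the apartment ${\mathcal A}={\mathcal A}_{B}$, a subspace $S\in{\mathcal G}_{k-1}(V)$ spanned by a subset of $B$, and the star ${\mathcal C}=[S\rangle_{k}\cap{\mathcal A}=\{P_{1},\dots,P_{m}\}$ with $m=n-k+1\ge 3$; the point is to show that $f({\mathcal C})$ is a clique. The plan is: use Theorem \ref{theorem2-1} to fix the type of the $J(n,k)$-subset $f({\mathcal A})$ (of first type, of second type, or, when $n=2k$, an apartment in a parabolic subspace) and Lemma \ref{lemma2-2} for the sizes of its stars and tops; then bring in auxiliary apartments ${\mathcal A}'$ obtained from ${\mathcal A}$ by replacing one vector of the base $B$. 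Each such ${\mathcal A}'$ contains all but one element of ${\mathcal C}$, together with a controlled set of further elements of ${\mathcal A}$, and $f({\mathcal A})\cap f({\mathcal A}')=f({\mathcal A}\cap{\mathcal A}')$ lies simultaneously in the two $J(n,k)$-subsets $f({\mathcal A})$ and $f({\mathcal A}')$; matching the two Johnson structures along such overlaps should pin $f({\mathcal C})$ down inside a single maximal clique of $f({\mathcal A})$. Granting $(\ast)$ for all apartments, the reductions above complete the proof.

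The main obstacle is $(\ast)$ itself: a priori $f|_{{\mathcal A}}$ is only a set bijection between the two copies ${\mathcal A}$ and $f({\mathcal A})$ of $J(n,k)$, and one must prevent it from splitting a maximal clique. This is exactly where the global hypothesis --- that $f$ is a $J$-mapping, and not just well behaved on the one apartment ${\mathcal A}$ --- has to be used, through the interaction of $f({\mathcal A})$ with the $J(n,k)$-subsets $f({\mathcal A}')$ for apartments ${\mathcal A}'$ meeting ${\mathcal A}$ in a large set, in the spirit of the ``inexact subset'' method behind \cite[Theorem 3.10]{Pankov-book}. The feature absent there is that $f({\mathcal A})$ is a $J(n,k)$-subset rather than an apartment, so (for $n\ne 2k$) it may be of first or of second type, with the roles of stars and tops exchanged (Lemma \ref{lemma2-2}); hence the reconstruction of $f({\mathcal C})$ and the matching of Johnson structures along overlaps must be organised as a case analysis according to Theorem \ref{theorem2-1}.
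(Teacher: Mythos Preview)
Your reductions to $1<k\le n-k$, the injectivity argument, and the edge-counting observation (a bijection between two finite copies of $J(n,k)$ that preserves edges automatically preserves non-edges, hence is a graph isomorphism) are all correct, and the latter is a pleasant way to reduce everything to adjacency preservation. The paper carries out the same initial reductions, but then proceeds more structurally: rather than aiming directly at your~$(\ast)$, it first pins down a single ambient subspace $S\in{\mathcal G}_{k'-k}(V')$ containing the image of $f$ (Lemmas~\ref{lemma5-1} and~\ref{lemma5-2}), factors $f=\Phi_{S}\circ g$, and then uses the special/complement-subset machinery of Subsection~4.1 together with Lemma~\ref{lemma4-4} to obtain distance preservation for $g$ directly (Lemma~\ref{lemma5-3}).

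The gap in your plan is precisely in~$(\ast)$. Using auxiliary apartments ${\mathcal A}'$ adjacent to ${\mathcal A}$ and studying $f({\mathcal A})\cap f({\mathcal A}')=f({\mathcal A}\cap{\mathcal A}')$ is indeed the right move, but two obstacles stand between you and any ``matching of Johnson structures along overlaps'', and your sketch does not address either. First, when $k<n-k$ there is no a~priori reason why $f({\mathcal A})$ and $f({\mathcal A}')$ should be $J(n,k)$-subsets of the same type; if they were of different types the $a(n,k)$-element overlap would not live in a single ${\mathcal J}_{k}(X)$, and the inexact-subset argument would not even get started. Ruling this out is the content of Subsection~4.3: one first forces both images into a common parabolic $[S,U]_{k'}$ with $\dim(U/S)=2k$, and then the counting estimate $|{\mathcal Z}|<a(n,k)$ of Lemma~\ref{lemma4-8} gives a contradiction. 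Second, even when the types agree, the subspace $S$ (resp.\ $U$) associated with $f({\mathcal A}')$ could differ from that of $f({\mathcal A})$; again the overlap would fail to be an inexact subset of any single ${\mathcal J}_{k}(X)$. The paper handles this in Lemmas~\ref{lemma5-1} and~\ref{lemma5-2}, propagating a common $S$ through the connectedness of the apartment graph (Proposition~\ref{prop4-1}). Only after both obstacles are cleared can one conclude that the overlap is a special subset (Lemma~\ref{lemma4-3}); at that point Lemma~\ref{lemma4-4} already yields full distance preservation, so your edge-counting shortcut, while correct, does not in the end save any work.
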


Some corollaries of Theorem \ref{theorem-main} will be given in Section 6.

\section{Intersections of $J(n,k)$-subsets}
\setcounter{equation}{0}

\subsection{Special subsets}
Let $X=\{x_{1},\dots,x_{n}\}$ be a $(2k)$-independent subset of a vector space $W$
(the dimension of $W$ is assumed to be not less than $2k$ and $n\ge 2k$) and let $k\ge 2$.
Consider the set ${\mathcal J}={\mathcal J}_{k}(X)$ formed by all $k$-dimensional subspaces spanned by subsets of $X$.
For every $i\in \{1,\dots,n\}$ we denote by ${\mathcal J}(+i)$ and ${\mathcal J}(-i)$
the sets consisting of all elements of ${\mathcal J}$ which contain $x_{i}$ and do not contain $x_{i}$, respectively.
Also, we write ${\mathcal J}(+i,+j)$ for the intersection of ${\mathcal J}(+i)$ and ${\mathcal J}(+j)$.
Every
$${\mathcal J}(+i,+j)\cup {\mathcal J}(-i),\;\;\;i\ne j$$
is said to be a {\it special} subset of ${\mathcal J}$.

We say that a subset ${\mathcal X}\subset {\mathcal J}$ is {\it inexact} if
there is a $(2k)$-independent $n$-element subset $Y\subset W$ such that
${\mathcal J}_{k}(Y)\ne {\mathcal J}$
(at least one of the vectors belonging to $Y$ is not a scalar multiple of a vector from $X$)
and ${\mathcal X}\subset {\mathcal J}_{k}(Y)$.

\begin{lemma}\label{lemma4-1}
Every inexact subset is contained in a special subset.
\end{lemma}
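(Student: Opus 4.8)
The plan is to characterize inexactness combinatorially and then show that any set missing this combinatorial obstruction is already all of $\mathcal J$, hence not properly contained in any $\mathcal J_k(Y)$ with $\mathcal J_k(Y)\neq\mathcal J$. Concretely, I would first show that a subset $\mathcal X\subset\mathcal J$ which is \emph{not} contained in any special subset must, for every pair $i\neq j$, contain some element of $\mathcal J(+i,-j)$ and some element of $\mathcal J(-i)$ (equivalently: $\mathcal X$ is not contained in $\mathcal J(+i,+j)\cup\mathcal J(-i)$ means $\mathcal X$ meets the complement $\mathcal J(-i,+j)\setminus\dots$, i.e. $\mathcal X$ contains a $k$-subspace spanned by a subset of $X$ that omits $x_i$ but contains $x_j$). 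The first step is therefore to unwind the definition of the special subsets into this "for all $i\neq j$, $\mathcal X$ witnesses separation of $x_i$ from $x_j$" condition.

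Next I would set up the contrapositive: suppose $\mathcal X$ is inexact, witnessed by a $(2k)$-independent $n$-element $Y=\{y_1,\dots,y_n\}$ with $\mathcal J_k(Y)\neq\mathcal J$ and $\mathcal X\subset\mathcal J_k(Y)$. Since $\mathcal J_k(Y)\neq\mathcal J$, some $y_p$ is not a scalar multiple of any $x_i$; write $y_p=\sum_{i\in I}a_i x_i$ in the basis-like coordinates afforded by $X$ (using $(2k)$-independence, any $k$ of the $x_i$ are independent, so the relevant coordinate expressions make sense), where $|I|\geq 2$. Pick two indices $i,j\in I$. The goal is to show that every element of $\mathcal J_k(Y)$ — in particular every element of $\mathcal X$ — lies in $\mathcal J(+i,+j)\cup\mathcal J(-i)$, i.e. in the special subset $\mathcal J(+i,+j)\cup\mathcal J(-i)$ relative to $X$. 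Equivalently: there is no $k$-subspace spanned by a $k$-subset of $X$ that contains $x_i$ but omits $x_j$ and also lies in $\mathcal J_k(Y)$.

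The heart of the argument is this last claim, and I expect it to be the main obstacle: one must argue that a $k$-dimensional subspace $P$ spanned by a $k$-subset of $X$ which contains $x_i$ but not $x_j$ cannot be spanned by any $k$-subset of $Y$. The idea is a linear-algebra/rank argument exploiting $(2k)$-independence of both $X$ and $Y$: if $P=\langle x_i\rangle\oplus(\text{something})$ were also $\langle y_{t_1},\dots,y_{t_k}\rangle$, then combining the $\leq k$ vectors $y_{t_1},\dots,y_{t_k}$ with the $\leq k$ vectors spanning $P$ from $X$ (at most $2k$ vectors total, so any dependence among them is "visible") forces $y_p$ — whose support in $X$ includes both $i$ and $j$ — into a contradiction with $x_j\notin P$, because $y_p\in\langle Y\rangle$-relations would drag $x_j$ into $P$. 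Making this precise requires a careful bookkeeping of which $x$'s and $y$'s appear, invoking $(2k)$-independence to control ranks; this is where the real work lies. Once this claim is established, $\mathcal X\subset\mathcal J_k(Y)\subset\mathcal J(+i,+j)\cup\mathcal J(-i)$, so $\mathcal X$ is contained in a special subset, completing the contrapositive and hence the lemma.
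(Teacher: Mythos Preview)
Your proposal has a genuine gap at the key step. You take $y_p\in Y$ not proportional to any $x_i$, write $y_p=\sum_{i\in I}a_ix_i$, pick any $i,j\in I$, and claim that no $P\in\mathcal J$ containing $x_i$ but not $x_j$ can lie in $\mathcal J_k(Y)$. The choice of the ordered pair $(i,j)$ is not arbitrary. Take $n=6$, $k=2$, $X=\{x_1,\dots,x_6\}$ a basis, and $Y=\{x_1+x_2,\,x_1,\,x_3,\,x_4,\,x_5,\,x_6\}$; this $Y$ is $4$-independent and $\mathcal J_2(Y)\neq\mathcal J$. The only ``bad'' vector is $y_1=x_1+x_2$, so $I=\{1,2\}$. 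With $(i,j)=(1,2)$ your claim fails: $\langle x_1,x_3\rangle=\langle y_2,y_3\rangle\in\mathcal J_k(Y)\cap\mathcal J$ contains $x_1$ but not $x_2$. Your sketched reason, that relations through $y_p$ would ``drag $x_j$ into $P$'', does not apply, because $y_p=y_1$ plays no role in spanning $P$. (With $(i,j)=(2,1)$ the inclusion does hold in this example, but nothing in your argument selects the correct ordering.) A second issue: $X$ is only assumed $(2k)$-independent, not a basis, so the expression $y_p=\sum_{i\in I}a_ix_i$ need not be well-defined.

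The paper's proof is much shorter and avoids both problems by working directly with $\mathcal X$ rather than with a witness $Y$. For each $i$ set $S_i:=\bigcap\{P\in\mathcal X:x_i\in P\}$ (or $S_i=0$ if there is no such $P$). If every $S_i=\langle x_i\rangle$ then $\mathcal X$ determines each $\langle x_i\rangle$ and cannot be inexact: any intersection of elements of $\mathcal J_k(Y)$ is spanned by a subset of $Y$ (use $(2k)$-independence and induction on the number of terms), so each $\langle x_i\rangle$ would equal some $\langle y\rangle$, forcing $\mathcal J_k(Y)=\mathcal J$. Hence some $S_i\neq\langle x_i\rangle$. If $S_i=0$ then $\mathcal X\subset\mathcal J(-i)$; if $\dim S_i\geq 2$, any $j\neq i$ with $x_j\in S_i$ gives $\mathcal X\subset\mathcal J(+i,+j)\cup\mathcal J(-i)$. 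Notice that the correct pair $(i,j)$ is read off from $\mathcal X$ itself, which is exactly what your approach was missing.
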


\begin{proof}
Let ${\mathcal X}$ be an inexact subset.
Denote by $S_i$ the intersection of all elements of ${\mathcal X}$ containing $x_{i}$
and set $S_{i}=0$ if there are no elements of ${\mathcal X}$ containing $x_{i}$.
There is at least one $i$ such that $S_{i}\ne \langle x_i \rangle$
(otherwise, ${\mathcal X}$ is not inexact).
Then $S_{i}=0$ or $\dim S_{i}\ge 2$.
In the first case, ${\mathcal X}$ is contained in  ${\mathcal J}(-i)$ which gives the claim.
If $\dim S_{i}\ge 2$ then  the inclusion
$${\mathcal X}\subset {\mathcal J}(+i,+j)\cup {\mathcal J}(-i)$$
holds for any $j\ne i$ such that $x_{j}\in S_{i}$.
\end{proof}

\begin{lemma}\label{lemma4-2}
If $X$ is independent then  the class of maximal inexact subsets coincides with
the class of special subsets.
\end{lemma}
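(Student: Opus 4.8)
The plan is to prove both inclusions between the class of maximal inexact subsets and the class of special subsets, under the extra hypothesis that $X = \{x_1,\dots,x_n\}$ is a base of $W$ (so $n = \dim W$ and ${\mathcal J} = {\mathcal J}_k(X)$ is an apartment of ${\mathcal G}_k(W)$). By Lemma \ref{lemma4-1} every inexact subset sits inside some special subset, so once we know that each special subset is itself inexact, it will follow that the maximal inexact subsets are exactly the maximal special subsets; it then remains to check that distinct special subsets are incomparable, so that \emph{every} special subset is maximal inexact. Thus the proof splits into two tasks: (i) each special subset ${\mathcal J}(+i,+j)\cup{\mathcal J}(-i)$ is inexact, and (ii) no special subset is properly contained in another.

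For task (i), fix $i\neq j$ and set $Y = \{y_1,\dots,y_n\}$ where $y_l = x_l$ for $l\neq i$ and $y_i = x_i + x_j$. Since $X$ is a base, $Y$ is again a base of $W$, and clearly $y_i$ is not a scalar multiple of any $x_l$, so ${\mathcal J}_k(Y)\neq{\mathcal J}$. I would then verify the inclusion ${\mathcal J}(+i,+j)\cup{\mathcal J}(-i)\subset{\mathcal J}_k(Y)$ directly from the definitions. An element $P\in{\mathcal J}(-i)$ is spanned by a $k$-subset of $X$ not involving $x_i$, hence by the same $k$-subset of $Y$, so $P\in{\mathcal J}_k(Y)$. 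An element $P\in{\mathcal J}(+i,+j)$ is spanned by a $k$-subset of $X$ containing both $x_i$ and $x_j$; replacing $x_i$ by $y_i = x_i + x_j$ in that spanning set changes nothing since $x_j$ is already present, so $P$ is spanned by a $k$-subset of $Y$ and again $P\in{\mathcal J}_k(Y)$. This shows ${\mathcal J}(+i,+j)\cup{\mathcal J}(-i)$ is inexact.

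For task (ii), suppose ${\mathcal J}(+i,+j)\cup{\mathcal J}(-i)\subset{\mathcal J}(+p,+q)\cup{\mathcal J}(-p)$ with $(i,j)$ and $(p,q)$ distinct pairs (here $i\neq j$, $p\neq q$). I would exhibit an element on the left that is not on the right. If $p\neq i$: pick a $k$-subset of $X$ containing $x_i, x_j, x_p$ but not $x_q$ (possible because $k\ge 2$ and $n\ge 2k$ leaves enough room; when $k=2$ take exactly $\{x_i,x_j\}$ if $p\notin\{i,j\}$, or handle the sub-case $p=j$ separately by a $k$-subset containing $x_i=x_i,x_j=x_p$ but avoiding $x_q$). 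The resulting $P$ contains $x_i$, hence lies in ${\mathcal J}(+i,+j)$, but it contains $x_p$ and not $x_q$, so $P\notin{\mathcal J}(+p,+q)$ and $P\notin{\mathcal J}(-p)$, contradiction. If $p = i$ then necessarily $q\neq j$; take a $k$-subset of $X$ containing $x_i = x_p$ but not $x_j$ — such a $P$ lies in ${\mathcal J}(-j)$? no, I want it in the left-hand set, so instead take a $k$-subset containing $x_i$ and $x_j$ but not $x_q$: it lies in ${\mathcal J}(+i,+j)$ but not in ${\mathcal J}(+p,+q)={\mathcal J}(+i,+q)$ and not in ${\mathcal J}(-i)={\mathcal J}(-p)$, contradiction. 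So no containment holds, every special subset is maximal inexact, and combined with Lemma \ref{lemma4-1} the two classes coincide.

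The main obstacle is the bookkeeping in task (ii): one must be careful that, for every allowed configuration of the indices $i,j,p,q$ and for the extreme value $k=2$, there really is a $k$-element subset of $X$ realizing the required incidence pattern with $x_i,x_j,x_p,x_q$. Since $|X| = n \ge 2k$, after fixing the two or three forced vectors we still have at least $2k - 3 \ge k - 1$ further vectors of $X$ available, so a suitable $k$-subset always exists; this counting check, done case by case on whether $\{p,q\}$ meets $\{i,j\}$, is the one genuinely fiddly point and is presumably what the author means by "easy verification" or relegates to a short argument. Everything else is immediate from the definitions of ${\mathcal J}(+i)$, ${\mathcal J}(-i)$, and inexactness, together with Lemma \ref{lemma4-1}.
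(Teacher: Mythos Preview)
Your task (i) is exactly the paper's argument: replace $x_i$ by $x_i+x_j$ and observe that the resulting $Y$ is still a base (hence $(2k)$-independent) and that ${\mathcal J}_k(Y)$ contains the special subset in question.

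Your task (ii), however, contains an actual error. In the case $p\neq i$, $p\notin\{i,j\}$, $k=2$, you propose $P=\langle x_i,x_j\rangle$. But then $x_p\notin P$, so $P\in{\mathcal J}(-p)\subset{\mathcal J}(+p,+q)\cup{\mathcal J}(-p)$, and $P$ is \emph{not} a witness against the containment. In fact for $k=2$ the set ${\mathcal J}(+i,+j)$ consists of the single element $\langle x_i,x_j\rangle$, so your strategy of finding the witness inside ${\mathcal J}(+i,+j)$ cannot work here; you must look in ${\mathcal J}(-i)$ instead (e.g.\ take $P=\langle x_p,x_l\rangle$ with $l\notin\{i,p,q\}$, which exists since $n\ge 4$). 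The case analysis can be repaired along these lines, but as written it is incorrect.

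More to the point, the paper bypasses task (ii) entirely. Immediately after this lemma the paper notes that every special subset has exactly
\[
a(n,k)=\binom{n-2}{k-2}+\binom{n-1}{k}
\]
elements. Since all special subsets have the same cardinality, none can be properly contained in another, and this one-line observation replaces your whole case analysis. With that in hand, Lemma~\ref{lemma4-1} plus ``every special subset is inexact'' immediately give that the maximal inexact subsets are precisely the special subsets. You would do well to use the cardinality argument instead.
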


\begin{proof}
By Lemma \ref{lemma4-1}, it sufficient to show that
every special subset is inexact.
Since $X$ is independent,
$$Y:=(X\setminus\{x_{i}\})\cup \{x_{i}+x_{j}\}$$
is independent and
${\mathcal J}_{k}(Y)$ contains
the special subset ${\mathcal J}(+i,+j)\cup {\mathcal J}(-i)$.
\end{proof}

\begin{rem}{\rm
Suppose that $R={\mathbb Z}_2$ and $X=\{x_{1},\dots,x_{5}\}$, where $x_{1},\dots,x_{4}$ are linearly independent vectors and
$$x_{5}=x_{1}+\dots+x_{4}.$$
Then $k=2$ and $X$ is $4$-independent.
The vectors
$x_{1}+x_{2},x_{3},x_{4},x_{5}$
are not linearly independent and $x_1$ can not be replaced by $x_{1}+x_{2}$ as in the proof of Lemma \ref{lemma4-2}.
The subspace $\langle x_1, x_2\rangle$ contains only three non-zero vectors --- $x_1,x_2,x_1+x_2$.
This means that ${\mathcal J}(+1,+2)\cup {\mathcal J}(-1)$
can not be inexact. The same arguments show that every special subset is not inexact.
}\end{rem}

\begin{rem}{\rm
It is not difficult to prove that all special subsets are inexact if $R$ is infinite, but we do not need
this fact.
}\end{rem}

The subsets ${\mathcal J}(+i,+j)$ and ${\mathcal J}(-i)$ are disjoint.
This means that every special subset contains precisely
$$a(n,k):=|{\mathcal J}(+i,+j)|+|{\mathcal J}(-i)|=\binom{n-2}{k-2}+\binom{n-1}{k}$$
elements. Lemma \ref{lemma4-1} implies the following.

\begin{lemma}\label{lemma4-3}
If an inexact subset consists of $a(n,k)$ elements
then it is a special subset.
\end{lemma}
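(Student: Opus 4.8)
The plan is to obtain this immediately from Lemma \ref{lemma4-1} together with the cardinality computation carried out just before the statement, so the argument will be extremely short. First I would recall that every special subset ${\mathcal J}(+i,+j)\cup{\mathcal J}(-i)$ is a disjoint union of ${\mathcal J}(+i,+j)$ and ${\mathcal J}(-i)$, whose sizes are $\binom{n-2}{k-2}$ and $\binom{n-1}{k}$ respectively; hence every special subset has precisely $a(n,k)$ elements, a number that does not depend on the choice of $i\neq j$.

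Next I would take an inexact subset ${\mathcal X}$ with $|{\mathcal X}|=a(n,k)$. By Lemma \ref{lemma4-1} there is a special subset ${\mathcal S}$ with ${\mathcal X}\subset{\mathcal S}$. Since $|{\mathcal S}|=a(n,k)=|{\mathcal X}|$ and these are finite sets related by inclusion, the inclusion must be an equality, so ${\mathcal X}={\mathcal S}$ is a special subset. That closes the proof.

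I do not expect any real obstacle here: the entire content has already been prepared by Lemma \ref{lemma4-1} and the disjointness remark, and what remains is only the trivial observation that a subset of a finite set of the same cardinality coincides with that set. The one thing worth stating explicitly, if one wishes to be scrupulous, is that $a(n,k)$ is the common size of \emph{all} special subsets, which is clear from the binomial expression; no genuine combinatorial or linear-algebraic work is needed.
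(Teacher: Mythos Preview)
Your argument is correct and matches the paper's exactly: the paper simply states ``Lemma~\ref{lemma4-1} implies the following'' after recording that every special subset has precisely $a(n,k)$ elements, leaving the equality-from-inclusion step implicit. There is nothing to add or change.
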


A subset ${\mathcal X}\subset {\mathcal J}$ is said to be {\it complement}
if ${\mathcal J}\setminus {\mathcal X}$ is special, i.e.
$${\mathcal J}\setminus {\mathcal X}={\mathcal J}(+i,+j)\cup {\mathcal J}(-i)$$
for some distinct $i,j$. Then
$${\mathcal X}={\mathcal J}(+i)\cap {\mathcal J}(-j).$$
This complement subset will be denoted by ${\mathcal J}(+i,-j)$.

\begin{lemma}\label{lemma4-4}
Let $P,Q\in {\mathcal J}$. Then $d(P,Q)=m$ if and only if there are precisely
$$(k-m)(n-k-m)$$
distinct complement subsets of ${\mathcal J}$ containing both $P$ and $Q$.
\end{lemma}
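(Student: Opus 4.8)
The plan is to translate the statement into a count inside the Johnson graph $J(n,k)$. For a $k$-element subset $A\subset\{1,\dots,n\}$ write $e(A):=\langle x_{a}:a\in A\rangle$. Given two $k$-element subsets $A,B$, the set $\{x_{c}:c\in A\cup B\}$ has at most $2k$ elements and so is linearly independent by $(2k)$-independence of $X$; examining a relation $\sum_{a\in A}\lambda_{a}x_{a}=\sum_{b\in B}\mu_{b}x_{b}$ then yields $e(A)\cap e(B)=e(A\cap B)$, hence $\dim(e(A)\cap e(B))=|A\cap B|$. Thus $A\mapsto e(A)$ is a bijection from the vertex set of $J(n,k)$ onto ${\mathcal J}$ preserving adjacency, so $d(P,Q)=k-|A\cap B|$ when $P=e(A)$ and $Q=e(B)$. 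Since $|A\cup\{i\}|\le k+1\le 2k$, the same argument shows that $x_{i}\in e(A)$ if and only if $i\in A$; consequently ${\mathcal J}(+i)$, ${\mathcal J}(-i)$ and ${\mathcal J}(+i,-j)$ correspond to the $k$-element subsets of $\{1,\dots,n\}$ containing $i$, not containing $i$, and containing $i$ but not $j$, respectively.

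Next I would check that $(i,j)\mapsto {\mathcal J}(+i,-j)$ is injective on ordered pairs of distinct indices, so that the number of complement subsets of ${\mathcal J}$ containing both $P$ and $Q$ equals the number of such pairs with $P,Q\in {\mathcal J}(+i,-j)$. From the family of all $k$-element subsets containing $i$ and avoiding $j$ one recovers $i$ as the intersection of the family and $\{1,\dots,n\}\setminus\{j\}$ as its union; the hypotheses $k\ge 2$ and $n\ge 2k$ guarantee that this family is large enough for both recoveries to return a single index. Equivalently, ${\mathcal J}\setminus {\mathcal J}(+i,-j)={\mathcal J}(+i,+j)\cup {\mathcal J}(-i)$, and this special subset determines the pair $(i,j)$.

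Now the count is immediate. For $P=e(A)$ and $Q=e(B)$ we have $P,Q\in {\mathcal J}(+i,-j)$ exactly when $i\in A\cap B$ and $j\notin A\cup B$, in which case $i\ne j$ holds automatically. If $d(P,Q)=m$ then $|A\cap B|=k-m$ and $|A\cup B|=k+m$, so there are $k-m$ admissible choices of $i$ and $n-k-m$ admissible choices of $j$, giving exactly $(k-m)(n-k-m)$ complement subsets containing both $P$ and $Q$. For the converse, $m$ runs over $\{0,1,\dots,k\}$ (recall $n\ge 2k$), and on this range the values $(k-m)(n-k-m)$ are pairwise distinct because the sequence is strictly decreasing, its consecutive differences being $n-2m-1\ge 1$; hence the number of complement subsets containing $P$ and $Q$ determines $d(P,Q)$.

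I expect the only point requiring real care to be the injectivity of $(i,j)\mapsto {\mathcal J}(+i,-j)$ in the extreme cases ($k=2$, or $n=2k$); the remainder is bookkeeping once the Johnson-graph dictionary is set up.
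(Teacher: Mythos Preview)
Your proof is correct and follows essentially the same route as the paper: both count the ordered pairs $(i,j)$ with $x_i\in P\cap Q$ (equivalently $i\in A\cap B$) and $x_j\notin P\cup Q$ (equivalently $j\notin A\cup B$), yielding $(k-m)(n-k-m)$ choices. Your version is simply more thorough---you explicitly justify the injectivity of $(i,j)\mapsto{\mathcal J}(+i,-j)$ and handle the converse direction via strict monotonicity of $m\mapsto(k-m)(n-k-m)$ on $\{0,\dots,k\}$---whereas the paper leaves these points implicit.
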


\begin{proof}
The equality $d(P,Q)=m$ implies that
$$\dim(P\cap Q)=k-m\;\mbox{ and }\;\dim(P+Q)=k+m.$$
The complement subset ${\mathcal J}(+i,-j)$ contains both $P$ and $Q$
if and only if
$$x_{i}\in P\cap Q\;\mbox{ and }\;x_{j}\not\in P+Q.$$
So, there are precisely $k-m$ possibilities for $i$ and precisely $n-k-m$ possibilities for $j$.
\end{proof}

\subsection{Connectedness of the apartment graph}
Suppose that $1<k\le n-k$. If $X$ is a base of $V$ then ${\mathcal J}_{k}(X)$ is an apartment of ${\mathcal G}_{k}(V)$
and, by Lemma \ref{lemma4-2}, the class of maximal inexact subsets coincides with the class of special subsets.
Two apartments of ${\mathcal G}_{k}(V)$ are said to be {\it adjacent} if their intersection is a maximal inexact subset.
Consider the graph ${\rm A}_{k}$ whose vertices are apartments of ${\mathcal G}_{k}(V)$ and whose edges are pairs of adjacent apartments.

\begin{prop}\label{prop4-1}
The graph ${\rm A}_{k}$ is connected.
\end{prop}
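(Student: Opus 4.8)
The plan is to show that any two apartments of ${\mathcal G}_{k}(V)$ can be joined by a path in ${\rm A}_{k}$, and the natural way to do this is to connect an arbitrary apartment to a fixed ``reference'' apartment by a sequence of elementary moves, each of which replaces one vector of the defining base by another so that consecutive bases give adjacent apartments. Concretely, fix a base $B=\{e_{1},\dots,e_{n}\}$ of $V$. Given another base $B'=\{v_{1},\dots,v_{n}\}$, I would like to pass from ${\mathcal J}_{k}(B')$ to ${\mathcal J}_{k}(B)$ by changing one basis vector at a time, at each step keeping the intermediate set of vectors a base of $V$, and arranging that the apartment before the change and the apartment after the change share a maximal inexact (i.e. special) subset.

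The key computation is this: if $B''$ is a base obtained from a base $\tilde B$ by replacing a single vector $x_{i}$ with a new vector $x_{i}'$ (leaving the other $n-1$ vectors $x_{j}$, $j\ne i$, unchanged), then the two apartments ${\mathcal J}_{k}(\tilde B)$ and ${\mathcal J}_{k}(B'')$ both contain every $k$-dimensional subspace spanned by a $k$-subset of $\{x_{j}:j\ne i\}$; that common part is exactly ${\mathcal J}(-i)$ in the notation of Section 4.1 (computed inside either apartment, since $x_{i}$ does not occur). By Lemma \ref{lemma4-1} and Lemma \ref{lemma4-2} a special subset is a maximal inexact subset, so if ${\mathcal J}(-i)$ is itself special — which it is, being ${\mathcal J}(+i,+j)\cup{\mathcal J}(-i)$ degenerated? — one must be slightly careful: ${\mathcal J}(-i)$ alone is contained in the special subset ${\mathcal J}(+i,+j)\cup{\mathcal J}(-i)$ but need not equal it. So the cleaner route is to choose the replacement so that the intersection is genuinely a special subset. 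I would instead use the elementary move of Lemma \ref{lemma4-2}'s proof: replace $x_{i}$ by $x_{i}+x_{j}$ for some $j\ne i$. Then ${\mathcal J}_{k}(\tilde B)\cap{\mathcal J}_{k}(B'')$ is precisely ${\mathcal J}(+i,+j)\cup{\mathcal J}(-i)$ (a subspace spanned by a $k$-subset of $\tilde B$ survives iff it avoids $x_{i}$, or contains both $x_{i}$ and $x_{j}$ so that it still contains $x_{i}+x_{j}$ together with $x_{j}$), hence the two apartments are adjacent. Thus every such ``transvection-type'' move is an edge of ${\rm A}_{k}$.

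It then remains to check that the group of base changes generated by the moves $x_{i}\mapsto x_{i}+x_{j}$, together with permutations of the base and rescalings $x_{i}\mapsto a x_{i}$, acts transitively enough on bases to connect any ${\mathcal J}_{k}(B')$ to ${\mathcal J}_{k}(B)$. Permuting the vectors of a base does not change the apartment at all, and rescaling a single vector also leaves the apartment unchanged (it spans the same subspaces); so these moves are ``free'' and I may use standard Gaussian-elimination bookkeeping. The point is that the elementary matrices $I+E_{ij}$ (which is exactly the move $x_{i}\mapsto x_{i}+x_{j}$ applied to the dual indexing, up to transpose), together with diagonal and permutation matrices, generate $GL_{n}(R)$; expressing the change-of-basis matrix from $B'$ to $B$ as such a product gives a path from ${\mathcal J}_{k}(B')$ to ${\mathcal J}_{k}(B)$ in ${\rm A}_{k}$ whose edges are the transvection moves above. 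Since $B'$ was arbitrary, ${\rm A}_{k}$ is connected.

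The main obstacle is the bookkeeping at the level of apartments rather than bases: several different bases give the same apartment, so one must make sure that when a product of elementary matrices is applied to $B'$, \emph{each successive vector system is still a base} (so ${\mathcal J}_{k}$ of it is defined and is an apartment) and that \emph{consecutive apartments in the sequence are actually adjacent}, i.e. the intersection is exactly a special subset and not something smaller. Restricting to the moves $x_{i}\mapsto x_{i}+x_{j}$ (never $x_{i}\mapsto x_{i}-x_{j}$ or $x_{i}\mapsto x_{i}+a x_{j}$ with general $a$, which one reduces to the unit case by first rescaling $x_{j}$) keeps the intersection computation clean, so the only real work is the elementary-matrix factorization over a division ring, which is classical. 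I expect no further difficulty, and the proof should be short once this reduction is in place.
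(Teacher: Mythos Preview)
Your approach is correct and is essentially the paper's: both identify the transvection move $x_{i}\mapsto x_{i}+a x_{j}$ on a base as producing an adjacent apartment (intersection exactly ${\mathcal J}(+i,+j)\cup{\mathcal J}(-i)$) and then reduce an arbitrary base change to a chain of such moves. The paper carries out that reduction by hand --- first swapping in vectors of $B'$ one at a time to reach bases sharing $n-1$ vectors, then inducting on the number of summands in the linear expansion of the single new vector --- rather than invoking generation of $GL_{n}(R)$ by elementary matrices, so its argument is self-contained, but the content is the same.
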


\begin{proof}
Let  $B$ and $B'$ be bases of $V$.
The associated apartments of ${\mathcal G}_{k}(V)$ will be denoted by  ${\mathcal A}$ and ${\mathcal A}'$, respectively.
Suppose that ${\mathcal A}\ne{\mathcal A}'$ and show that these apartments can be connected in ${\rm A}_{k}$.

First we consider the case when $|B\cap B'|=n-1$.
Let
$$B=\{x_{1},\dots, x_{n-1},x_{n}\}\;\mbox{ and }\;B'=\{x_{1},\dots, x_{n-1},x'_{n}\}.$$
Since ${\mathcal A}\ne{\mathcal A}'$, the vector $x'_{n}$ is a linear combination of $x_{n}$ and some others $x_{i_{1}},\dots,x_{i_{m}}$.
Clearly, we can suppose that
$$x'_{n}=ax_{n} +\sum^{m}_{i=1}a_{i}x_{i}\;\mbox{ with }\;m \le n-1.$$
We prove the statement induction on $m$.
If $m=1$ then
$${\mathcal A}\cap {\mathcal A}'={\mathcal J}(+n,+1)\cup {\mathcal J}(-n)$$
is a maximal inexact subset and ${\mathcal A},{\mathcal A}'$ are adjacent.
Let $m\ge 2$. Denote by ${\mathcal A}''$ the apartment of ${\mathcal G}_{k}(V)$
associated with the base $x_{1},\dots, x_{n-1},x''_{n}$, where
$$x''_{n}:=ax_{n} +\sum^{m-1}_{i=1}a_{i}x_{i}.$$
By inductive hypothesis, ${\mathcal A}$ and ${\mathcal A}''$ can be can be connected in ${\rm A}_{k}$.
The equality $$x'_{n}=x''_{n}+a_{m}x_{m}$$ guarantees that
${\mathcal A}''$ and ${\mathcal A}'$ are adjacent.
This implies the existence of a path connecting ${\mathcal A}$ with ${\mathcal A}'$.

Now consider the case when $|B\cap B'|=m<n-1$ (possible $m=0$).
Suppose that
$$B\setminus B'=\{x_{1},\dots,x_{n-m}\}\;\mbox{ and }\;x'\in B'\setminus B.$$
For every $i\in \{1,\dots, n-m\}$ we define
$$S_{i}:=\langle B\setminus\{x_i\}\rangle.$$
Since the intersection of all $S_i$ coincides with $\langle B\cap B'\rangle$
and $x'$ does not belong to $\langle B\cap B'\rangle$,
there is at least one $S_i$ which does not contain $x'$.
Then
$$B_{1}:=(B\setminus \{x_{i}\})\cup \{x'\}$$
is a base of $V$. Denote by ${\mathcal A}_{1}$ the associated apartment of ${\mathcal G}_{k}(V)$.
It is clear that
$$|B\cap B_{1}|=n-1\;\mbox{ and }\;|B_{1}\cap B'|=m+1.$$
The apartment ${\mathcal A}_{1}$ coincides with ${\mathcal A}$ (if $x'$ is a scalar multiple of $x_{i}$)
or ${\mathcal A}$ and ${\mathcal A}_{1}$ are connected in ${\rm A}_{k}$.
Step by step we construct a sequence of bases
$$B=B_{0},B_{1},\dots, B_{n-m}=B'$$
such that $|B_{i-1}\cap B_{i}|=n-1$ for every $i\in \{1,\dots,n-m\}$.
Let ${\mathcal A}_i$ be the apartment of ${\mathcal G}_{k}(V
)$ associated with $B_{i}$.
Then for every $i\in \{1,\dots,n-m\}$
we have ${\mathcal A}_{i-1}={\mathcal A}_{i}$ or ${\mathcal A}_{i-1}$ and ${\mathcal A}_{i}$ are connected in ${\rm A}_{k}$.
This means that ${\mathcal A}={\mathcal A}_{0}$ and ${\mathcal A}'={\mathcal A}_{n-m}$ are connected in ${\rm A}_{k}$.
\end{proof}

\subsection{Intersections of $J(n,k)$-subsets of different types}
In this subsection we suppose that $W$ is a $(2k)$-dimensional vector space and $k\ge 2$.
Let
$$X=\{x_{1},\dots,x_{n}\}\;\mbox{ and }\;Y=\{y^{*}_{1},\dots,y^{*}_{n}\},\;\;n>2k$$
be $(2k)$-independent subsets of $W$ and $W^{*}$, respectively.
Denote by $U_{i}$ the annihilator of $y^{*}_{i}$. This is a $(2k-1)$-dimensional subspace of $W$. Suppose that the following conditions hold:
\begin{enumerate}
\item[$\bullet$] every $U_{i}$ is spanned by a subset of $X$,
\item[$\bullet$] every $\langle x_i\rangle$ is the intersection of some $U_{j}$.
\end{enumerate}
Since $X$ is a $(2k)$-independent subset, every $U_{i}$ is spanned by a $(2k-1)$-element subset $X_{i}\subset X$
and it does not contain any vector of $X\setminus X_i$.
Similarly, $Y$ is $(2k)$-independent and
every $x_i$ is contained in precisely $2k-1$ distinct $U_{j}$ whose intersection coincides with  $\langle x_i\rangle$.

We will investigate the intersection
$${\mathcal Z}:={\mathcal J}_{k}(X)\cap{\mathcal J}^{*}_{k}(Y).$$
It is formed by all elements of ${\mathcal G}_{k}(W)$ which are spanned by subsets of $X$
and can be presented as the intersections of $k$ distinct $U_j$.

We define
$$b(n,k):=\frac{\binom{2k-1}{k}n}{k}.$$
Note that this integer is not necessarily natural.
\begin{lemma}\label{lemma4-5}
$|{\mathcal Z}| \le b(n,k)$.
\end{lemma}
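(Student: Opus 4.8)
The plan is to count the elements of $\mathcal{Z}$ by a double-counting (incidence) argument over pairs $(\langle x_i\rangle, P)$ with $\langle x_i\rangle \subset P$ and $P \in \mathcal{Z}$. First I would recall the two structural facts established just before the statement: each element $P \in \mathcal{Z}$ is spanned by a $k$-element subset of $X$ (since $P \in \mathcal{J}_k(X)$ and $X$ is $(2k)$-independent, so the representing subset is unique), and each $P \in \mathcal{Z}$ is the intersection of exactly $k$ of the subspaces $U_j$ (since $P \in \mathcal{J}_k^*(Y)$ and $Y$ is $(2k)$-independent). Fix an index $i \in \{1,\dots,n\}$ and count the elements $P \in \mathcal{Z}$ containing $\langle x_i\rangle$. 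Such a $P$ is the intersection of $k$ of the $U_j$, all of which must contain $x_i$; but $x_i$ lies in precisely $2k-1$ of the $U_j$. Hence the $k$ subspaces whose intersection is $P$ are chosen from these $2k-1$ subspaces, giving at most $\binom{2k-1}{k}$ elements of $\mathcal{Z}$ through $\langle x_i\rangle$.

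Next I would sum this bound over $i$ and divide by the multiplicity with which each $P$ is counted. Since each $P \in \mathcal{Z}$ is a $k$-dimensional space spanned by a $k$-element subset of $X$, it contains exactly $k$ of the lines $\langle x_i\rangle$. Therefore
$$k\,|\mathcal{Z}| = \sum_{i=1}^{n} \bigl|\{P \in \mathcal{Z} : \langle x_i\rangle \subset P\}\bigr| \le n\binom{2k-1}{k},$$
which rearranges to $|\mathcal{Z}| \le \binom{2k-1}{k}n/k = b(n,k)$, as required.

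The one point that needs care — and the place where I expect the main subtlety — is the claim that an element $P \in \mathcal{Z}$ which contains $x_i$ must be an intersection of $k$ of the $U_j$ that each contain $x_i$. This uses that $P$ is the intersection of exactly $k$ of the $U_j$ and that each such $U_j \supseteq P \supseteq \langle x_i\rangle$, which is immediate once we know the representation of $P$ as an intersection of $k$ of the $U_j$ exists; the $(2k)$-independence of $Y$ (equivalently, the hypothesis that each $\langle x_i\rangle$ is an intersection of some $U_j$, combined with $\dim U_i = 2k-1$) guarantees that $x_i$ lies in exactly $2k-1$ of them, pinning down the count $\binom{2k-1}{k}$. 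Everything else is the routine double count above; no genuine obstacle remains once these two facts are in place.
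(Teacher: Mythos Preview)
Your proof is correct and follows essentially the same double-counting argument as the paper: bound the number of elements of $\mathcal{Z}$ through each $x_i$ by $\binom{2k-1}{k}$ (using that $x_i$ lies in exactly $2k-1$ of the $U_j$), then use that each element of $\mathcal{Z}$ contains exactly $k$ of the $x_i$ to obtain $|\mathcal{Z}| = \tfrac{1}{k}\sum_i |\mathcal{Z}_i| \le b(n,k)$. The subtlety you flag is handled exactly as in the paper, and there is no gap.
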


\begin{proof}
Denote by ${\mathcal Z}_{i}$ the set of all elements of ${\mathcal Z}$ containing $x_{i}$.
There are precisely $2k-1$ distinct $U_j$ containing $x_{i}$ and every element of ${\mathcal Z}$ is the intersection of $k$ distinct $U_{j}$.
This means that ${\mathcal Z}_{i}$ contains not greater than $\binom{2k-1}{k}$ elements.
Since every element of ${\mathcal Z}$ belongs to $k$ distinct ${\mathcal Z}_i$,
we have
$$|{\mathcal Z}|=\frac{|{\mathcal Z}_1|+\dots+|{\mathcal Z}_n|}{k}$$
which implies the required inequality.
\end{proof}

\begin{lemma}\label{lemma4-6}
$a(n,k)> b(n,k)$ except the case when $n=5$ and $k=2$.
\end{lemma}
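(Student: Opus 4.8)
The plan is to compare the two quantities
$$a(n,k)=\binom{n-2}{k-2}+\binom{n-1}{k}\quad\text{and}\quad b(n,k)=\frac{\binom{2k-1}{k}n}{k}$$
as functions of $n$ for fixed $k\ge 2$, and to show that the excess $a(n,k)-b(n,k)$ is positive for all $n>2k$ with the single exception $(n,k)=(5,2)$. Since the hypotheses in this subsection already assume $n>2k$, the real content is a two-variable inequality.

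First I would rewrite $a(n,k)$ in a form amenable to comparison. Using the Pascal-type identity and the fact that $\binom{n-1}{k}=\binom{n-1}{n-1-k}$, one sees that $a(n,k)=\binom{n-2}{k-2}+\binom{n-1}{k}$ is, up to lower-order corrections, of order $\binom{n-1}{k}\sim \frac{n^k}{k!}$, which grows like a degree-$k$ polynomial in $n$. By contrast $b(n,k)=\frac{\binom{2k-1}{k}}{k}\,n$ is \emph{linear} in $n$. So for $k\ge 3$ the inequality $a(n,k)>b(n,k)$ should hold for \emph{all} $n>2k$ once one checks the boundary case $n=2k+1$ and verifies monotonicity of the difference in $n$; the degree-$k$ term eventually dominates and the only danger is small $n$. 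I would therefore treat $k=2$ separately by hand, and for $k\ge 3$ argue by induction on $n$ starting from $n=2k+1$, showing the forward difference $\bigl(a(n+1,k)-a(n,k)\bigr)-\bigl(b(n+1,k)-b(n,k)\bigr)=\binom{n-2}{k-3}+\binom{n-1}{k-1}-\frac{\binom{2k-1}{k}}{k}$ is nonnegative for $n\ge 2k+1$ (this itself is a polynomial-vs-constant comparison, easy once $n\ge 2k+1$ since $\binom{n-1}{k-1}\ge\binom{2k}{k-1}$ is already large enough).

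For the base case $k=2$: here $a(n,2)=\binom{n-2}{0}+\binom{n-1}{2}=1+\frac{(n-1)(n-2)}{2}$ and $b(n,2)=\frac{3n}{2}$, so $a(n,2)-b(n,2)=1+\frac{(n-1)(n-2)-3n}{2}=\frac{n^2-6n+6}{2}$. This is negative exactly at $n=2,3,4$, zero nowhere with integer $n$, and positive for $n\ge 5$ — but we only care about $n>2k=4$, i.e. $n\ge5$, where it is $\tfrac{1}{2}>0$ at $n=5$. Wait: at $n=5$ this gives $\tfrac{25-30+6}{2}=\tfrac12>0$, so in fact $a>b$ already at $n=5$ when $k=2$?

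Let me recompute — the statement claims the \emph{exceptional} case is $n=5,k=2$, so I must have mismatched $a$. Rechecking: with $k=2$, $\binom{n-2}{k-2}=\binom{n-2}{0}=1$ and $\binom{n-1}{k}=\binom{n-1}{2}$; at $n=5$, $a=1+\binom{4}{2}=1+6=7$, while $b=\frac{\binom{3}{2}\cdot5}{2}=\frac{15}{2}=7.5$, so indeed $a<b$ here. My polynomial expansion was off: $1+\frac{(n-1)(n-2)}{2}$ at $n=5$ is $1+6=7$, and $\frac{3n}{2}=7.5$; so $a-b=7-7.5=-0.5$, meaning the expression is $\frac{(n-1)(n-2)+2-3n}{2}=\frac{n^2-6n+4}{2}$, which at $n=5$ is $\frac{25-30+4}{2}=-\frac12$, at $n=6$ is $\frac{36-36+4}{2}=2>0$, and is increasing for $n\ge5$. \textbf{Thus for $k=2$ the inequality $a(n,2)>b(n,2)$ holds for all $n\ge6$ and fails only at $n=5$, which is precisely the claimed exception.} The main obstacle is simply organizing the $k\ge3$ induction cleanly so that the single sporadic failure at $(5,2)$ is isolated; everything else reduces to elementary estimates on binomial coefficients, with the linear-versus-polynomial growth gap doing the real work.
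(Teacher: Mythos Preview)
Your approach is sound but differs from the paper's. After the $k=2$ computation (which you eventually get right; clean up the false start), the paper does \emph{not} induct on $n$. For $k\ge 3$ it factors both quantities over the common denominator $k!$:
\[
a(n,k)=\bigl[k(k-1)+(n-1)(n-k)\bigr]\frac{(n-2)\cdots(n-k+1)}{k!},\qquad
b(n,k)=\bigl[n(k+1)\bigr]\frac{(2k-1)\cdots(k+2)}{k!},
\]
and then proves the two elementary inequalities $k(k-1)+(n-1)(n-k)>n(k+1)$ and $(n-2)\cdots(n-k+1)\ge(2k-1)\cdots(k+2)$ directly from $n\ge 2k+1$, $k\ge 3$. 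Multiplying gives $a>b$ for all such $(n,k)$ in one stroke, with no base case to check.

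Your induction works too: the forward-difference estimate $\binom{n-1}{k-1}\ge\binom{2k}{k-1}\ge\tfrac{1}{k}\binom{2k-1}{k}$ is correct for all $k\ge 2$. The gap in your write-up is that you never actually verify the base case $a(2k+1,k)>b(2k+1,k)$ for $k\ge 3$; you only say it should be checked. Using $\binom{2k}{k}=2\binom{2k-1}{k}$ and $\binom{2k-1}{k-2}=\tfrac{k-1}{k+1}\binom{2k-1}{k}$, that base case reduces to $\tfrac{k-1}{k+1}>\tfrac{1}{k}$, i.e.\ $k^2-2k-1>0$, which holds for $k\ge 3$ and fails for $k=2$ --- so the base case is precisely where the exceptional $(5,2)$ lives. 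Once you fill this in, the two proofs are of comparable length; the paper's factorisation handles all $n$ at once, while your induction makes the role of $n=2k+1$ more transparent.
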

\begin{proof}
We have
$$a(n,2)=1+\frac{(n-1)(n-2)}{2}=\frac{n^2-3n+4}{2}\;\mbox{ and }\; b(n,2)=\frac{3n}{2}.$$
An easy verification shows that the equality $a(n,2)> b(n,2)$ does not hold only for $n=5$.

From this moment we suppose that $k\ge 3$.
Then
$$a(n,k)=\binom{n-2}{k-2}+\binom{n-1}{k}=\frac{(n-2)!}{(k-2)!(n-k)!}+\frac{(n-1)!}{k!(n-k-1)!}=$$
$$=\frac{(n-2)\dots(n-k+1)\cdot k(k-1)}{k!}+\frac{(n-1)\dots(n-k)}{k!}=$$
$$=[k(k-1)+(n-1)(n-k)]\frac{(n-2)\dots(n-k+1)}{k!}$$
and
$$b(n,k)=\frac{\binom{2k-1}{k}n}{k}=\frac{n(2k-1)!}{k!k!}=\frac{n(2k-1)\dots(k+1)}{k!}=$$
$$=[n(k+1)]\frac{(2k-1)\dots(k+2)}{k!}.$$
Since $n\ge 2k+1$ and $k\ge 3$,
$$(n-1)(n-k)+k(k-1)=(n-1)(n-k)+(k+1)(k-1)-(k-1)\ge$$
$$\ge(n-1)(k+1)+(k+1)(k-1)-(k-1)=(n+k-2)(k+1)-(k-1)\ge$$
$$\ge (n+1)(k+1)-(k-1)=n(k+1)+2>n(k+1).$$
So,
\begin{equation}\label{eq4-1}
k(k-1)+(n-1)(n-k)>n(k+1).
\end{equation}
Also, $n\ge 2k+1$ implies that
$$n-2\ge 2k-1,\dots,n-k+1\ge k+2$$
and we have
\begin{equation}\label{eq4-2}
(n-2)\dots(n-k+1)\ge(2k-1)\dots(k+2).
\end{equation}
The inequality
$$a(n,k)=[k(k-1)+(n-1)(n-k)]\frac{(n-2)\dots(n-k+1)}{k!}>$$
$$>[n(k+1)]\frac{(2k-1)\dots(k+2)}{k!}=b(n,k)$$
follows from \eqref{eq4-1} and \eqref{eq4-2}.
\end{proof}

\begin{lemma}\label{lemma4-7}
If $n=5$ and $k=2$ then
$|{\mathcal Z}|\le 5<7=a(5,2)$.
\end{lemma}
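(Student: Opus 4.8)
The plan is to pin down the combinatorial shape of the family $\{U_{1},\dots,U_{5}\}$ and then read off ${\mathcal Z}$ from it. Since $a(5,2)=\binom{3}{0}+\binom{4}{2}=7$, it suffices to prove $|{\mathcal Z}|\le 5$. Here $k=2$, so $W$ is $4$-dimensional and each $U_{i}$ is the span of a $3$-element subset $X_{i}\subset X$; write $C_{i}:=X\setminus X_{i}$ for the complementary two-element set of indices. Because $Y$ is $(2k)$-independent, the functionals $y^{*}_{1},\dots,y^{*}_{5}$ are pairwise non-proportional, so the subspaces $U_{i}$, and hence the pairs $C_{i}$, are pairwise distinct; moreover, as noted just before the lemma, each $x_{i}$ lies in exactly $2k-1=3$ of the $U_{j}$, which is to say that each index lies in exactly two of the $C_{j}$.

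First I would observe that $\{C_{1},\dots,C_{5}\}$ is then the edge set of a $2$-regular simple graph on the vertex set $\{1,\dots,5\}$, and the only such graph is a single $5$-cycle. After relabeling I may assume $C_{i}=\{i,i+1\}$, indices taken modulo $5$, so that $U_{i}=\langle x_{i+2},x_{i+3},x_{i+4}\rangle$.

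Next I would describe the pairwise intersections. Since $C_{i}\ne C_{j}$ forces $|C_{i}\cap C_{j}|\le 1$, the sum $U_{i}+U_{j}=\langle X\setminus(C_{i}\cap C_{j})\rangle$ is spanned by at least four members of $X$ and therefore equals $W$; hence $\dim(U_{i}\cap U_{j})=2$ for every $i\ne j$. A vector $x_{\ell}$ lies in $U_{i}$ exactly when $\ell\notin C_{i}$ (if $\ell\in C_{i}$ then $X_{i}\cup\{x_{\ell}\}$ would be a dependent four-element subset of $X$), so $U_{i}\cap U_{j}$ contains a member of $X$ precisely for the indices $\ell\notin C_{i}\cup C_{j}$. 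When $C_{i}\cap C_{j}=\emptyset$ there is exactly one such $\ell$, so the two-dimensional subspace $U_{i}\cap U_{j}$ is not spanned by a two-element subset of $X$ and contributes nothing to ${\mathcal Z}$; when $j=i\pm 1$ modulo $5$ there are exactly two such indices $r,s$, and then $U_{i}\cap U_{j}=\langle x_{r},x_{s}\rangle$ does belong to ${\mathcal Z}$.

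Finally I would collect the survivors: the elements of ${\mathcal Z}$ are precisely the subspaces $U_{i}\cap U_{i+1}=\langle x_{i+3},x_{i+4}\rangle$ for $i=1,\dots,5$ (indices modulo $5$), and these five subspaces are distinct because $X$ is $(2k)$-independent. Hence $|{\mathcal Z}|=5<7=a(5,2)$, as claimed. The one step that needs genuine care, and where I would be most attentive, is the rigidity argument: verifying that the $C_{i}$ are forced to be pairwise distinct and that the incidence graph is $2$-regular, so that the configuration must be the $5$-cycle, together with the check that the ``disjoint-$C_{i}$'' intersections really do fail to be spanned by two of the $x_{i}$. Everything after that is routine bookkeeping.
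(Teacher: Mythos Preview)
Your proof is correct and gives the sharper conclusion $|{\mathcal Z}|=5$, but it follows a different line from the paper. The paper never identifies the cycle structure: it simply argues that if some $U_{i}$ contained three elements of ${\mathcal Z}$ (necessarily the three $2$-dimensional coordinate planes inside it), then one of the $x_{\ell}$ would end up lying in four of the $U_{j}$, contradicting the fact that each $x_{\ell}$ lies in exactly three; hence every $U_{i}$ contains at most two elements of ${\mathcal Z}$, and a double count (each element of ${\mathcal Z}$ sits in at least two $U_{j}$) gives $|{\mathcal Z}|\le\tfrac{2\cdot 5}{2}=5$. Your approach instead pins down the incidence data completely: the complements $C_{i}$ form a $2$-regular simple graph on five vertices, which must be a $5$-cycle, and from this you read off exactly which pairwise intersections $U_{i}\cap U_{j}$ lie in ${\mathcal J}_{k}(X)$. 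The paper's argument is shorter and needs no classification step; yours is more explicit and shows the bound is attained, at the cost of the rigidity argument you flag (distinctness of the $C_{i}$ and $2$-regularity), which you justify correctly via $(2k)$-independence of $Y$ and $X$.
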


\begin{proof}
In the present case, $U_1,\dots,U_{5}$ are $3$-dimensional, each $x_{i}$ is contained in precisely $3$ distinct $U_{j}$
and every element of ${\mathcal Z}$ is the intersection of $2$ distinct $U_j$.
If every $U_i$ contains not greater than $2$ elements of ${\mathcal Z}$
then $|{\mathcal Z}|\le \frac{2\cdot5}{2}=5$ (since every element of ${\mathcal Z}$ is contained in $2$ distinct $U_j$).

Suppose that $U_1$ is spanned by $x_1,x_2,x_3$ and contains $3$ elements of ${\mathcal Z}$.
These are $\langle x_1,x_2\rangle$, $\langle x_1,x_3\rangle$, $\langle x_2,x_3\rangle$.
Suppose that these subspaces are the intersections of $U_1$ with $U_2,U_3,U_4$.
Then each $x_i$, $i\in \{1,2,3\}$ is contained in $3$ distinct $U_{j}$, $j\in \{1,2,3,4\}$.
The subspace $U_5$ contains at least one of $x_i$, $i\in \{1,2,3\}$
and this $x_i$ is contained in $4$ distinct $U_j$, a contradiction.

The same arguments show that every $U_i$ contains not greater than $2$ elements of ${\mathcal Z}$ and we get the claim.
\end{proof}

Joining all results of this subsection, we get the following.

\begin{lemma}\label{lemma4-8}
$|{\mathcal Z}|<a(n,k)$.
\end{lemma}

\section{Proof of Theorem \ref{theorem-main}}

\setcounter{equation}{0}

Let $f$ be a $J$-mapping of ${\mathcal G}_{k}(V)$ to ${\mathcal G}_{k'}(V')$.

\begin{lemma}\label{lemma5-0}
The mapping $f$ is injective.
\end{lemma}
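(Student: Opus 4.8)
The plan is to exploit a straightforward cardinality count together with the stated fact that any two $k$-dimensional subspaces of $V$ lie in a common apartment. First I would record that an apartment ${\mathcal A}$ of ${\mathcal G}_{k}(V)$ associated with a base $B$ consists precisely of the $k$-dimensional subspaces spanned by the $k$-element subsets of $B$, so that $|{\mathcal A}|=\binom{n}{k}$. On the other side, a $J(n,k)$-subset of ${\mathcal G}_{k'}(V')$ is, by definition, the image of an isometric embedding --- in particular an injection --- of the Johnson graph $J(n,k)$, whose vertex set also has $\binom{n}{k}$ elements; hence every $J(n,k)$-subset has exactly $\binom{n}{k}$ elements. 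Since $V$ is finite-dimensional, all the sets in sight are finite.

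Next let $P,Q$ be distinct elements of ${\mathcal G}_{k}(V)$. By the first of the two facts recalled just before Theorem \ref{theorem2-1}, there is an apartment ${\mathcal A}$ of ${\mathcal G}_{k}(V)$ containing both $P$ and $Q$. Because $f$ is a $J$-mapping, $f({\mathcal A})$ is a $J(n,k)$-subset, so $|f({\mathcal A})|=\binom{n}{k}=|{\mathcal A}|$. A surjection between two finite sets of the same cardinality is a bijection, so the restriction of $f$ to ${\mathcal A}$ is injective; in particular $f(P)\ne f(Q)$. As $P$ and $Q$ were arbitrary, $f$ is injective.

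I do not expect a real obstacle here. The only point deserving a moment of care is the identification of the size of a $J(n,k)$-subset: it rests on the injectivity of any isometric embedding of $J(n,k)$ and on the matching of the two vertex-set cardinalities, together with the reading of ``$f$ sends an apartment to a $J(n,k)$-subset'' as an equality $f({\mathcal A})={\mathcal J}$ rather than a mere inclusion. The degenerate ranges $k\in\{0,n\}$, where ${\mathcal G}_{k}(V)$ is a single point, and $k=1,n-1$ are covered by the same argument without any modification.
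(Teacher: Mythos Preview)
Your proof is correct and follows essentially the same approach as the paper's own proof: take distinct $P,Q$, choose an apartment ${\mathcal A}$ containing both, and use that $|f({\mathcal A})|=|{\mathcal A}|$ (since $f({\mathcal A})$ is a $J(n,k)$-subset) to conclude that $f|_{\mathcal A}$ is injective. The paper's version is simply terser, omitting the explicit computation of $\binom{n}{k}$ and your careful justification that a $J(n,k)$-subset has the right cardinality.
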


\begin{proof}
Let $P,Q$ be distinct elements of ${\mathcal G}_{k}(V)$.
We take an apartment ${\mathcal A}\subset {\mathcal G}_{k}(V)$ containing $P$ and $Q$.
Since $f({\mathcal A})$ is a $J(n,k)$-subset,
${\mathcal A}$ and $f({\mathcal A})$ have the same number of elements which implies that
$f(P)\ne f(Q)$.
\end{proof}

Consider the mapping $f_{*}$ which transfers every $P\in {\mathcal G}_{n-k}(V^{*})$ to $f(P^{0})$.
This is a $J$-mapping of ${\mathcal G}_{n-k}(V^{*})$ to ${\mathcal G}_{k'}(V')$.
It is clear that $f$ is an isometric embedding of $\Gamma_{k}(V)$ in $\Gamma_{k'}(V')$
if and only if
$f_{*}$ is an isometric embedding of $\Gamma_{n-k}(V^{*})$ in $\Gamma_{k'}(V')$.
Therefore, it sufficient to prove Theorem \ref{theorem-main} only in the case when $k\le n-k$.

Suppose that $k=1$, i.e. $f$ is a $J$-mapping of ${\mathcal G}_{1}(V)$ to ${\mathcal G}_{k'}(V')$.
Any distinct $P,Q\in {\mathcal G}_{1}(V)$ are adjacent and there is
an apartment ${\mathcal A}\subset {\mathcal G}_{1}(V)$ containing $P,Q$.
Since $f({\mathcal A})$ is a $J(n,1)$-subset,
$f(P)$ and $f(Q)$ are adjacent vertices of $\Gamma_{k'}(V')$.
Thus $f$ is an isometric embedding of $\Gamma_{1}(V)$ in $\Gamma_{k'}(V')$.

From this moment we suppose that $2\le k\le n-k$. By Subsection 2.4, we have
$$k\le \min\{k',n-k,n'-k'\}.$$

\begin{lemma}\label{lemma5-1}
If $n=2k$ then there exists $S\in {\mathcal G}_{k'-k}(V')$ such that the image of $f$ is contained in $[S\rangle_{k'}$.
\end{lemma}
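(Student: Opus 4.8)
The plan is to produce the subspace $S$ from Theorem~\ref{theorem2-1} and to show that it does not depend on the apartment used, with the help of the connectedness of the apartment graph (Proposition~\ref{prop4-1}).

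Since $n=2k$, for every apartment $\mathcal A$ of ${\mathcal G}_k(V)$ the set $f(\mathcal A)$ is a $J(2k,k)$-subset, so by Theorem~\ref{theorem2-1} there are $S_{\mathcal A}\in{\mathcal G}_{k'-k}(V')$ and $U_{\mathcal A}\in{\mathcal G}_{k'+k}(V')$ and an apartment $\mathcal A'$ of ${\mathcal G}_k(U_{\mathcal A}/S_{\mathcal A})$ with $f(\mathcal A)=\Phi^{U_{\mathcal A}}_{S_{\mathcal A}}(\mathcal A')$; in particular every member of $f(\mathcal A)$ contains $S_{\mathcal A}$ and lies in $U_{\mathcal A}$, and $\dim(U_{\mathcal A}/S_{\mathcal A})=2k$. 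Because every $k$-dimensional subspace of $V$ belongs to some apartment, it is enough to prove that $S_{\mathcal A}$ does not depend on $\mathcal A$. By Proposition~\ref{prop4-1} --- together with Lemma~\ref{lemma4-2}, which identifies maximal inexact subsets with special subsets for apartments --- it suffices to show $S_{\mathcal A}=S_{\mathcal B}$ whenever $\mathcal A$ and $\mathcal B$ are adjacent apartments, i.e.\ $\mathcal A\cap\mathcal B$ is a special subset. In that case $|\mathcal A\cap\mathcal B|=a(2k,k)$, and since $f$ is injective (Lemma~\ref{lemma5-0}) we have $f(\mathcal A)\cap f(\mathcal B)=f(\mathcal A\cap\mathcal B)$, a set of exactly $a(2k,k)$ elements.

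Suppose $\mathcal A$ and $\mathcal B$ are adjacent and write the apartment $\mathcal B'\subset{\mathcal G}_k(U_{\mathcal B}/S_{\mathcal B})$ as the collection of the $k$-dimensional subspaces spanned by $k$-element subsets of a base $\{\bar z_1,\dots,\bar z_{2k}\}$ of $U_{\mathcal B}/S_{\mathcal B}$. Every $Q\in f(\mathcal A\cap\mathcal B)$ lies in $f(\mathcal B)$, hence $S_{\mathcal B}\subset Q\subset U_{\mathcal B}$, and $Q$ lies in $f(\mathcal A)$, hence $S_{\mathcal A}\subset Q$. Since $f(\mathcal A\cap\mathcal B)$ is nonempty this gives $S_{\mathcal A}\subset U_{\mathcal B}$, so $\bar S:=(S_{\mathcal A}+S_{\mathcal B})/S_{\mathcal B}$ is a well-defined subspace of $U_{\mathcal B}/S_{\mathcal B}$, and for $Q\in f(\mathcal B)$ the condition $S_{\mathcal A}\subset Q$ is equivalent to $\bar S\subset Q/S_{\mathcal B}$. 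As $\Phi^{U_{\mathcal B}}_{S_{\mathcal B}}$ is a bijection of $\mathcal B'$ onto $f(\mathcal B)$, it follows that $f(\mathcal A\cap\mathcal B)$ injects into $\{\,\bar Q\in\mathcal B'\;:\;\bar S\subset\bar Q\,\}$.

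It remains to bound this last set. If $S_{\mathcal A}\neq S_{\mathcal B}$, then, comparing dimensions, $\bar S\neq 0$; pick a nonzero $v\in\bar S$ and let $T$ be its support with respect to the base $\{\bar z_1,\dots,\bar z_{2k}\}$, so $s:=|T|\ge 1$. A member $\langle\bar z_{i_1},\dots,\bar z_{i_k}\rangle$ of $\mathcal B'$ contains $v$ exactly when $\{i_1,\dots,i_k\}\supset T$, so the number of members of $\mathcal B'$ containing $\bar S$ is at most $\binom{2k-s}{k-s}\le\binom{2k-1}{k-1}=\tfrac12\binom{2k}{k}$. Hence $a(2k,k)=|f(\mathcal A\cap\mathcal B)|\le\tfrac12\binom{2k}{k}$, which contradicts $a(2k,k)=\binom{2k-2}{k-2}+\binom{2k-1}{k}>\binom{2k-1}{k}=\tfrac12\binom{2k}{k}$ (note $\binom{2k-2}{k-2}\ge1$ since $k\ge2$). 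Therefore $S_{\mathcal A}=S_{\mathcal B}$, and the common value $S$ gives the lemma. The step needing the most care is the previous paragraph, where one passes to the quotient $U_{\mathcal B}/S_{\mathcal B}$: one must verify $S_{\mathcal A}\subset U_{\mathcal B}$ (which uses only that $f(\mathcal A\cap\mathcal B)\ne\emptyset$) and translate $S_{\mathcal A}\subset Q$ into an incidence condition inside the apartment $\mathcal B'$; everything else is the reduction via Proposition~\ref{prop4-1} and a short count.
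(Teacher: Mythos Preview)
Your proof is correct and follows essentially the same route as the paper's: reduce to adjacent apartments via Proposition~\ref{prop4-1}, use injectivity to get $|f(\mathcal A)\cap f(\mathcal B)|=a(2k,k)$, pass to the quotient by one of the two $(k'-k)$-subspaces, and observe that if the two subspaces differed then every element of the intersection would have to contain a nonzero subspace, forcing the count below $\binom{2k-1}{k-1}<a(2k,k)$. The only cosmetic differences are that the paper works in $V'/S$ (using $\Phi_S$ rather than $\Phi^U_S$, so the check $S_{\mathcal A}\subset U_{\mathcal B}$ is not needed) and bounds the count via $\dim\bigl((S+S')/S\bigr)$ directly rather than via the support of a single vector.
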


\begin{proof}
Let ${\mathcal A}$ and ${\mathcal A}'$ be distinct apartments of ${\mathcal G}_{k}(V)$.
Then $f({\mathcal A})$ and $f({\mathcal A}')$ are $J(n,k)$-subsets and, since $n=2k$, Theorem \ref{theorem2-1} implies that
$$f({\mathcal A})=\Phi_{S}({\mathcal J}_{k}(X))\;\mbox{ and }\;f({\mathcal A}')=\Phi_{S'}({\mathcal J}_{k}(X')),$$
where $S,S'\in {\mathcal G}_{k'-k}(V')$ and $X,X'$ are independent $(2k)$-element subsets of $V'/S$ and $V'/S'$,
respectively. We need to show that $S=S'$.

By Proposition \ref{prop4-1}, it is sufficient to consider the case when
${\mathcal A}$ and ${\mathcal A}'$ are adjacent.
Then
$$|f({\mathcal A})\cap f({\mathcal A}')|=|{\mathcal A}\cap {\mathcal A}'|=a(2k,k)$$
and
$${\mathcal X}:=(\Phi_{S})^{-1}(f({\mathcal A})\cap f({\mathcal A}'))$$
is a subset of ${\mathcal J}_{k}(X)$ consisting of $a(2k,k)$ elements.
Since $S+S'$ is contained in all elements of $f({\mathcal A})\cap f({\mathcal A}')$,
every element of ${\mathcal X}$ contains $T:=(S+S')/S$.
If $S\ne S'$ then $t=\dim T\ge 1$ and
$$|{\mathcal X}|\le \binom{2k-t}{k-t}$$
which implies that
$$|{\mathcal X}|\le \binom{2k-1}{k-1}=\frac{(2k-1)!}{(k-1)!k!}=\binom{2k-1}{k}<\binom{2k-1}{k}+\binom{2k-2}{k-2}=a(2k,k),$$
a contradiction. Thus $S=S'$.
\end{proof}

\begin{lemma}\label{lemma5-2}
Suppose that $k<n-k$.
If $f$ transfers an apartment ${\mathcal A}\subset{\mathcal G}_{k}(V)$ to a $J(n,k)$-subset of first type
then
the images of all apartments of ${\mathcal G}_{k}(V)$ are $J(n,k)$-subsets of first type and
there exists $S\in {\mathcal G}_{k'-k}(V')$ such that the image of $f$ is contained in $[S\rangle_{k'}$.
\end{lemma}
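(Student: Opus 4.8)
The plan is to use the connectedness of the apartment graph (Proposition~\ref{prop4-1}) to propagate the type, together with the cardinality estimates of Section 4 to forbid a change of type across an edge. First I would reduce the first assertion to adjacent apartments: since ${\rm A}_{k}$ is connected, it is enough to prove that if ${\mathcal A}$ and ${\mathcal A}'$ are adjacent apartments of ${\mathcal G}_{k}(V)$ and $f({\mathcal A})$ is a $J(n,k)$-subset of first type, then $f({\mathcal A}')$ is of first type as well. Suppose not; then by Theorem~\ref{theorem2-1} we may write $f({\mathcal A})=\Phi_{S}({\mathcal J}_{k}(X))$ with $S\in{\mathcal G}_{k'-k}(V')$ and $f({\mathcal A}')=\Phi^{U}({\mathcal J}^{*}_{k}(Y))$ with $U\in{\mathcal G}_{k'+k}(V')$, where $X\subset V'/S$ and $Y\subset U^{*}$ are the corresponding $(2k)$-independent $n$-element subsets.

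The next step is to force both images into a single small parabolic subspace. Because ${\mathcal A}$ and ${\mathcal A}'$ come from bases of $V$, hence from independent sets, their intersection is a special subset by Lemma~\ref{lemma4-2}, so $|{\mathcal A}\cap{\mathcal A}'|=a(n,k)$, and by Lemma~\ref{lemma5-0} we get $|f({\mathcal A})\cap f({\mathcal A}')|\ge a(n,k)$. Every element of $f({\mathcal A})\cap f({\mathcal A}')$ contains $S$ and is contained in $U$, so $S\subset U$, the space $U/S$ is $2k$-dimensional, and $f({\mathcal A})\cap f({\mathcal A}')\subset[S,U]_{k'}$. Applying $(\Phi^{U}_{S})^{-1}$, the part of $f({\mathcal A})$ lying in $[S,U]_{k'}$ is ${\mathcal J}_{k}(X_{0})$ with $X_{0}=X\cap(U/S)$, and the part of $f({\mathcal A}')$ lying there has the form ${\mathcal J}^{*}_{k}(Y_{0})$ for a $(2k)$-independent $Y_{0}$; since $\binom{|X_{0}|}{k}\ge a(n,k)>\binom{n-1}{k}$ forces $|X_{0}|=n$, and similarly $|Y_{0}|=n$, I conclude that $f({\mathcal A})$ and $f({\mathcal A}')$ lie entirely in $[S,U]_{k'}$ and pull back, through the isometry $\Phi^{U}_{S}$, to a $J(n,k)$-subset of ${\mathcal G}_{k}(U/S)$ of the first type and of the second type respectively, with $\dim(U/S)=2k$.

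Now I would apply the analysis of Subsection 4.3. After relabelling, the pulled-back intersection is exactly ${\mathcal Z}={\mathcal J}_{k}(X)\cap{\mathcal J}^{*}_{k}(Y)$ in the $2k$-dimensional space $U/S$, and $|{\mathcal Z}|\ge a(n,k)$. I would then check that $X$ and $Y$ satisfy the two standing hypotheses of Subsection 4.3, namely that each annihilator hyperplane $U_{i}$ is spanned by a subset of $X$ and that each $\langle x_{i}\rangle$ is the intersection of some of the $U_{j}$; the point is that if some $x_{i}$ lay on fewer than $2k-1$ of the hyperplanes $U_{j}$ (or some $U_{i}$ were not spanned by vectors of $X$), then the estimate $|{\mathcal Z}_{i}|\le\binom{d_{i}}{k}$ on the elements of ${\mathcal Z}$ through $x_{i}$, where $d_{i}$ is the number of $U_{j}$ containing $x_{i}$, would push $|{\mathcal Z}|$ below $a(n,k)$. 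With the hypotheses in force, Lemma~\ref{lemma4-8} yields $|{\mathcal Z}|<a(n,k)$, a contradiction. Hence adjacent apartments have images of the same type, and by Proposition~\ref{prop4-1} together with induction along paths in ${\rm A}_{k}$ every apartment of ${\mathcal G}_{k}(V)$ goes to a $J(n,k)$-subset of first type. I expect this verification of the hypotheses of Subsection 4.3 — converting the extremal size of ${\mathcal Z}$ into the rigid incidence pattern that Lemma~\ref{lemma4-8} needs — to be the main obstacle.

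For the last assertion, write $f({\mathcal A})=\Phi_{S_{\mathcal A}}({\mathcal J}_{k}(X_{\mathcal A}))$ for each apartment ${\mathcal A}$. Since every element of ${\mathcal G}_{k}(V)$ lies in some apartment, it suffices to show that $S_{\mathcal A}$ does not depend on ${\mathcal A}$: then the image of $f$ is contained in $[S\rangle_{k'}$ with $S=S_{\mathcal A}$. By Proposition~\ref{prop4-1} it is enough to treat adjacent ${\mathcal A}$ and ${\mathcal A}'$. All $a(n,k)$ elements of $f({\mathcal A})\cap f({\mathcal A}')$ contain $S_{\mathcal A}+S_{{\mathcal A}'}$; if $S_{\mathcal A}\ne S_{{\mathcal A}'}$, then after applying $(\Phi_{S_{\mathcal A}})^{-1}$ we obtain at least $a(n,k)$ elements of ${\mathcal J}_{k}(X_{\mathcal A})$ all containing the nonzero subspace $T=(S_{\mathcal A}+S_{{\mathcal A}'})/S_{\mathcal A}$. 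Choosing $0\ne v\in T$ and a minimal index set $I_{0}$ with $v\in\langle x_{i}:i\in I_{0}\rangle$, one checks, using the $(2k)$-independence of $X_{\mathcal A}$ and the inequality $n\ge 2k+1$, that $I_{0}\ne\varnothing$ and that $I_{0}$ is contained in the index set of every element of ${\mathcal J}_{k}(X_{\mathcal A})$ through $v$; hence there are at most $\binom{n-1}{k-1}<a(n,k)$ such elements, a contradiction. Thus $S_{\mathcal A}$ is constant, which completes the argument.
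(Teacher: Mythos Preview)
Your outline is correct, and the argument can be completed along the lines you indicate, but the route differs from the paper's. The paper does \emph{not} first force both images into $[S,U]_{k'}$ by the cardinality trick $\binom{|X_0|}{k}\ge a(n,k)>\binom{n-1}{k}$; instead it splits according to whether ${\mathcal X}=(\Phi_S)^{-1}(f({\mathcal A})\cap f({\mathcal A}'))$ is contained in a special subset of ${\mathcal J}_k(X)$. In the special case it locates an entire star of ${\mathcal J}_k(X)$ inside ${\mathcal X}$ and invokes Lemma~\ref{lemma2-2} (star sizes $n-k+1$ versus $k+1$) to read off the type of $f({\mathcal A}')$, and then pins down $S$ by picking $P,Q\in{\mathcal X}$ with $P\cap Q=0$ so that $\Phi_S(P)\cap\Phi_S(Q)=S$. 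Only in the non-special case does the paper derive the incidence conditions (*) and (**) and reach Lemma~\ref{lemma4-8}; it then shows this case cannot occur. Your approach instead does everything with counting: the size forcing $|X_0|=|Y_0|=n$ puts both images in $[S,U]_{k'}$, and a refinement of the bound in Lemma~\ref{lemma4-5} (using $|{\mathcal Z}_i|\le\binom{d_i}{k}$ and the dual inequality for the $U_j$) forces $d_i=e_j=2k-1$, hence the hypotheses of Subsection~4.3. This is more uniform and avoids Lemma~\ref{lemma2-2} altogether, but note that the step you flag as the main obstacle genuinely needs the improved estimate rather than just $|{\mathcal Z}|\le b(n,k)$: for $(n,k)=(5,2)$ one has $b(5,2)=\tfrac{15}{2}\ge a(5,2)=7$, so one must use that dropping a single $d_i$ to $2k-2$ already gives $|{\mathcal Z}|\le b(n,k)-\tfrac{1}{k}\binom{2k-2}{k-1}<a(n,k)$ (and the dual bound for $e_j$), after which Lemma~\ref{lemma4-7} finishes the remaining case. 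Your argument for $S_{\mathcal A}=S_{{\mathcal A}'}$ via a minimal support set $I_0$ and the bound $\binom{n-1}{k-1}<a(n,k)$ is a valid alternative to the paper's $P\cap Q=0$ trick.
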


\begin{proof}
By our hypothesis,
$$f({\mathcal A})=\Phi_{S}({\mathcal J}_{k}(X)),$$
where $S\in {\mathcal G}_{k'-k}(V')$ and $X$ is
a $(2k)$-independent subset of $V'/S$ consisting of $n$ vectors
$$\overline{x}_1=x_{1}+S,\dots,\overline{x}_n=x_{n}+S.$$
Denote by $S_i$ the $(k'-k+1)$-dimensional subspace of $V'$ corresponding to $\overline{x}_i$.
Every element of $f({\mathcal A})$ is the sum of $k$ distinct $S_j$.

Let ${\mathcal A}'$ be an apartment of ${\mathcal G}_{k}(V)$ distinct from ${\mathcal A}$.
We need to show that $f({\mathcal A}')$ is a $J(n,k)$-subset of first type and is contained in $[S\rangle_{k'}$.
By Proposition \ref{prop4-1}, it is sufficient to consider the case when
${\mathcal A}$ and ${\mathcal A}'$ are adjacent.
As in the proof of the previous lemma,
$${\mathcal X}:=(\Phi_{S})^{-1}(f({\mathcal A})\cap f({\mathcal A}'))$$
is a subset of ${\mathcal J}_{k}(X)$
consisting of $a(n,k)$ elements.
There are the following possibilities:
\begin{enumerate}
\item[(1)] ${\mathcal X}$ is contained in a special subset of ${\mathcal J}_{k}(X)$,
\item[(2)] there is no special subset of ${\mathcal J}_{k}(X)$ containing ${\mathcal X}$.
\end{enumerate}

{\it Case} (1).
Every special subset of ${\mathcal J}_{k}(X)$ consists of $a(n,k)=|{\mathcal X}|$ elements.
This implies that ${\mathcal X}$ is a special subset of ${\mathcal J}_{k}(X)$.
Suppose that
$${\mathcal X}={\mathcal J}(+i,+j)\cup {\mathcal J}(-i)$$
(see Subsection 4.1 for the notation).
We take any $(k-1)$-dimensional subspace $T\subset V'/S$ spanned by a subset of $X$ containing $\overline{x}_j$.
Then
$${\mathcal S}:={\mathcal J}_{k}(X)\cap [T\rangle_{k}$$
is a star of ${\mathcal J}_{k}(X)$ contained in ${\mathcal X}$
(if $P\in {\mathcal S}$ contains $\overline{x}_{i}$ then it belongs to ${\mathcal J}(+i,+j)$
and $P\in {\mathcal S}$ is an element of ${\mathcal J}(-i)$ if it does not contain $\overline{x}_{i}$).

Consider $\Phi_{S}({\mathcal S})$. This is a star of $f({\mathcal A})$.
By Lemma \ref{lemma2-2}, this star consists of $n-k+1$ vertices (since $f({\mathcal A})$ is a $J(n,k)$-subset of first type).
Also, it is contained in
$\Phi_{S}({\mathcal X})\subset f({\mathcal A}')$ and
Lemma \ref{lemma2-2} guarantees that $f({\mathcal A}')$ is a $J(n,k)$-subset of first type.

We take $P,Q\in {\mathcal X}$ such that $P\cap Q=0$.
The intersection of $\Phi_{S}(P)$ and $\Phi_{S}(Q)$ coincides with $S$.
Since $\Phi_{S}(P)$ and $\Phi_{S}(Q)$ both belong to $f({\mathcal A}')$ and $f({\mathcal A}')$ is a $J(n,k)$-subset of first type,
the associated $(k'-k)$-dimensional subspace of $V'$ coincides with $S$ and
$f({\mathcal A}')$ is contained in $[S\rangle_{k'}$.

{\it Case} (2).
For every $i\in \{1,\dots,n\}$ the intersection of all elements of ${\mathcal X}$
containing $\overline{x}_{i}$ coincides with $\langle \overline{x}_{i}\rangle$
(otherwise, as in the proof of Lemma \ref{lemma4-1} we show that ${\mathcal X}$ is contained in a special subset of ${\mathcal J}_{k}(X)$ which is impossible).
Then the intersection of all elements of
$$\Phi_{S}({\mathcal X})=f({\mathcal A})\cap f({\mathcal A}')$$
containing $S_i$ coincides with $S_{i}$.
This implies that the intersection of all elements of $f({\mathcal A})\cap f({\mathcal A}')$ is $S$.

Therefore, if $f({\mathcal A}')$ is a $J(n,k)$-subset of first type
then the associated $(k'-k)$-dimensional subspace of $V'$ coincides with $S$, i.e.
$f({\mathcal A}')$ is contained in $[S\rangle_{k'}$.
Then ${\mathcal X}$ is an inexact subset of ${\mathcal J}_{k}(X)$.
By Lemma \ref{lemma4-3}, ${\mathcal X}$ is a special subset of ${\mathcal J}_{k}(X)$ which is impossible.

So, $f({\mathcal A}')$ is a $J(n,k)$-subset of second type. Then
$$f({\mathcal A}')=\Phi^{U}({\mathcal J}^{*}_{k}(Y)),$$
where $U\in {\mathcal G}_{k'+k}(V')$ and $Y$ is a $(2k)$-independent subset of $U^{*}$
consisting of $n$ vectors $y^{*}_{1},\dots,y^{*}_{n}$.
Denote by $U_{i}$ the annihilator of $y^{*}_{i}$ (in $U$).
By Remark \ref{rem2-1}, every element of $f({\mathcal A}')$ is the intersection of $k$ distinct $U_j$.

The set
\begin{equation}\label{eq5-1}
(\Phi^{U})^{-1}(f({\mathcal A})\cap f({\mathcal A}'))
\end{equation}
is contained in ${\mathcal J}^{*}_{k}(Y)$.
Denote by ${\mathcal Y}$ the subset of  ${\mathcal J}_{k}(Y)$ formed by the annihilators of all elements of \eqref{eq5-1}.
It consists  of $a(n,k)$ elements.
If ${\mathcal Y}$ is contained in a special subset of ${\mathcal J}_{k}(Y)$ then
it coincides with this special subset. In this case, there is a star ${\mathcal S}\subset {\mathcal J}_{k}(Y)$ contained in ${\mathcal Y}$.
Let ${\mathcal S}^{0}$ be the subset of ${\mathcal J}^{*}_{k}(Y)$ consisting of the annihilators of all
elements of ${\mathcal S}$.
Then $\Phi^{U}({\mathcal S}^{0})$ is a top of $f({\mathcal A}')$ contained in $f({\mathcal A})\cap f({\mathcal A}')$.
This contradicts Lemma \ref{lemma2-2}, since $f({\mathcal A})$ and $f({\mathcal A}')$ are $J(n,k)$-subsets of different types.

Thus there is no special subset of ${\mathcal J}_{k}(Y)$ containing ${\mathcal Y}$.
This means that for every $i\in\{1,\dots,n\}$ the intersection of all elements of ${\mathcal Y}$
containing $y^{*}_i$ coincides with $\langle y^{*}_i\rangle$.
By Lemma \ref{lemma2-1}, $U_i$ is the sum of the annihilators (in $U$) of these elements; hence it is
the sum of some elements of $f({\mathcal A})\cap f({\mathcal A}')$.
Since every element of $f({\mathcal A})$ is the sum of $k$ distinct $S_j$,
\begin{enumerate}
\item[(*)] every $U_i$ is the sum of some $S_{j}$.
\end{enumerate}
This implies that every $U_{i}$ contains $S$ (since $S$ is contained in all $S_i$) and $f({\mathcal A}')$ is a subset of $[S\rangle_{k'}$
(every element of $f({\mathcal A}')$ is the intersection of $k$ distinct $U_j$).

Since the intersection of all elements of
$f({\mathcal A})\cap f({\mathcal A}')$ containing $S_i$ coincides with $S_{i}$
and every element of $f({\mathcal A}')$ is the intersection of $k$ distinct $U_j$,
\begin{enumerate}
\item[(**)] every $S_{i}$ is the intersection of some $U_{j}$.
\end{enumerate}
Then every $S_i$ is contained in $U$ and
$f({\mathcal A})$ is a subset of $\langle U]_{k'}$ (since every element of $f({\mathcal A})$ is the sum of $k$ distinct $S_j$).

So, $f({\mathcal A})$ and $f({\mathcal A}')$ both are contained in $[S,U]_{k'}$.
The vector space $W:=U/S$ is $2k$-dimensional.
It is clear that
$$f({\mathcal A})=\Phi^{U}_{S}({\mathcal J}_{k}(X))\;\mbox{ and }\;f({\mathcal A}')=\Phi^{U}_{S}({\mathcal J}^{*}_{k}(Y')),$$
where $Y'$ is the $(2k)$-independent $n$-element subset of $W^{*}$ induced by $Y$.
The annihilators of the vectors belonging to $Y'$ are $U_{i}/S$, $i\in \{1,\dots,n\}$.
The facts (*) and (**) guarantee that $X$ and $Y'$ satisfy the conditions of Subsection 4.3:
\begin{enumerate}
\item[$\bullet$] the annihilator of every element of $Y'$ is spanned by a subset of $X$,
\item[$\bullet$] every $\langle \overline{x}_{i}\rangle$ is the intersection of the annihilators of some elements from $Y'$.
\end{enumerate}
By Subsection 4.3,
$${\mathcal Z}:={\mathcal J}_{k}(X)\cap {\mathcal J}^{*}_{k}(Y')$$
contains less than $a(n,k)$ elements. This contradicts the fact that
$$\Phi^{U}_{S}({\mathcal Z})=f({\mathcal A})\cap f({\mathcal A}')$$
consists of  $a(n,k)$ elements.
So, the case (2) is impossible.
\end{proof}

By Lemma \ref{lemma5-2}, if $k<n-k$ then
the images of all apartments of ${\mathcal G}_{k}(V)$ are $J(n,k)$-subsets of the same type.

Suppose that one of the following possibilities is realized:
\begin{enumerate}
\item[$\bullet$] $n=2k$,
\item[$\bullet$] $k<n-k$ and the images of all apartments of ${\mathcal G}_{k}(V)$ are $J(n,k)$-subsets of first type.
\end{enumerate}
By Lemmas \ref{lemma5-1} and \ref{lemma5-2}, the image of $f$ is contained in $[S\rangle_{k'}$ with $S\in {\mathcal G}_{k'-k}(V')$.
This implies the existence of a mapping
$$g:{\mathcal G}_{k}(V)\to {\mathcal G}_{k}(V'/S)$$
such that $f=\Phi_{S}\circ g$.
This is a $J$-mapping which transfers every apartment of ${\mathcal G}_{k}(V)$
to a certain $J_{k}(X)$, where $X$ is a $(2k)$-independent $n$-element subset of $V'/S$.
Using results of Subsection 4.1, we prove the following.

\begin{lemma}\label{lemma5-3}
The mapping $g$ is an isometric embedding of $\Gamma_{k}(V)$ in $\Gamma_{k}(V'/S)$.
\end{lemma}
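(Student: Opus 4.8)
The plan is to prove that the restriction of $g$ to every apartment ${\mathcal A}$ of ${\mathcal G}_{k}(V)$ is distance-preserving; since any two $k$-dimensional subspaces of $V$ lie in a common apartment, and since $g$ is injective ($f=\Phi_S\circ g$ is injective by Lemma \ref{lemma5-0} and $\Phi_S$ is injective), this shows that $g$ is an isometric embedding of $\Gamma_{k}(V)$ in $\Gamma_{k}(V'/S)$. Fix an apartment ${\mathcal A}$, say with base $B=\{e_1,\dots,e_n\}$; by construction of $g$ we have $g({\mathcal A})={\mathcal J}_k(X)$ for some $(2k)$-independent $n$-element subset $X\subset W:=V'/S$, so the combinatorics of Subsection 4.1 is available: Lemmas \ref{lemma4-3} and \ref{lemma4-4} apply to both ${\mathcal A}$ and $g({\mathcal A})$, and Lemma \ref{lemma4-2} applies to ${\mathcal A}$ because $B$ is a base.

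The first step I would carry out is that $g$ maps each complement subset of ${\mathcal A}$ to a complement subset of $g({\mathcal A})$. A complement subset of ${\mathcal A}$ has the form ${\mathcal A}\setminus\mathrm{Sp}$ with $\mathrm{Sp}$ a special subset of ${\mathcal A}$; by Lemma \ref{lemma4-2} and the explicit construction in its proof (replacing $e_i$ by $e_i+e_j$), there is an apartment ${\mathcal A}''\ne{\mathcal A}$ with ${\mathcal A}\cap{\mathcal A}''=\mathrm{Sp}$. Since $g$ is injective, $g(\mathrm{Sp})=g({\mathcal A})\cap g({\mathcal A}'')$, a set of $a(n,k)$ elements lying in $g({\mathcal A}'')={\mathcal J}_k(X'')$, where ${\mathcal J}_k(X'')\ne{\mathcal J}_k(X)$ because ${\mathcal A}''\ne{\mathcal A}$. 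Hence $g(\mathrm{Sp})$ is an inexact subset of ${\mathcal J}_k(X)$ with exactly $a(n,k)$ elements, so by Lemma \ref{lemma4-3} it is a special subset of $g({\mathcal A})$, and therefore $g({\mathcal A}\setminus\mathrm{Sp})=g({\mathcal A})\setminus g(\mathrm{Sp})$ is a complement subset of $g({\mathcal A})$.

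Next I would count. Both ${\mathcal A}$ and $g({\mathcal A})={\mathcal J}_k(X)$ have precisely $n(n-1)$ complement subsets, one for each ordered pair of distinct indices: here $n\ge k+2$ (which follows from $n\ge 2k$ and $k\ge 2$) and the $(2k)$-independence of $X$ guarantee that the complement subset ${\mathcal J}(+i,-j)$ recovers the pair $(i,j)$. So the injective assignment $C\mapsto g(C)$ of the previous step is a bijection from the set of complement subsets of ${\mathcal A}$ onto the set of complement subsets of $g({\mathcal A})$; moreover, injectivity of $g$ gives $P,Q\in C\iff g(P),g(Q)\in g(C)$. Consequently, for $P,Q\in{\mathcal A}$ the number of complement subsets of ${\mathcal A}$ containing both $P$ and $Q$ equals the number of complement subsets of $g({\mathcal A})$ containing both $g(P)$ and $g(Q)$. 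Writing $m=d(P,Q)$ and $m'=d(g(P),g(Q))$ (both lying in $\{0,\dots,k\}$, the latter because $g(P),g(Q)$ belong to the isometrically embedded $J(n,k)$ that is ${\mathcal J}_k(X)$), Lemma \ref{lemma4-4} turns this equality into $(k-m)(n-k-m)=(k-m')(n-k-m')$. Since the function $t\mapsto(k-t)(n-k-t)$ is strictly decreasing on $\{0,\dots,k\}$ — its consecutive difference there is $1-(n-2t)\le-1$ because $n\ge 2k$ — we conclude $m=m'$.

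The step I expect to be the main obstacle is the claim that $g({\mathcal A})\cap g({\mathcal A}'')$ is a \emph{special} subset of $g({\mathcal A})$: a $J$-mapping a priori puts no constraint on how the image of a single apartment is positioned inside another, and what makes the argument go through is precisely the rigidity in Lemma \ref{lemma4-3} (an inexact subset of the extremal cardinality $a(n,k)$ must be special), combined with the injectivity of $g$. Everything after that is bookkeeping with Lemma \ref{lemma4-4} and the monotonicity of $(k-t)(n-k-t)$. The case $n=2k$ requires no modification: then $X$ is independent, hence $(2k)$-independent, and $(k-t)^2$ is still strictly decreasing on $\{0,\dots,k\}$.
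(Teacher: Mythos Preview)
Your proposal is correct and follows essentially the same approach as the paper's own proof: show that $g$ takes special subsets of an apartment ${\mathcal A}$ to special subsets of $g({\mathcal A})$ (via Lemma~\ref{lemma4-2}, injectivity of $g$, and Lemma~\ref{lemma4-3}), pass to complement subsets by the counting argument, and then invoke Lemma~\ref{lemma4-4}. The only notable difference is that you spell out the strict monotonicity of $t\mapsto (k-t)(n-k-t)$ on $\{0,\dots,k\}$ to extract $m=m'$ from the equal counts, whereas the paper simply cites Lemma~\ref{lemma4-4} for the conclusion; your extra care here is justified, since the ``only if'' direction of Lemma~\ref{lemma4-4} implicitly relies on exactly this injectivity.
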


\begin{proof}
Let $P,Q\in {\mathcal G}_{k}(V)$ and let ${\mathcal A}$ be an apartment of ${\mathcal G}_{k}(V)$
containing $P$ and $Q$.
If ${\mathcal X}$ is a special subset of ${\mathcal A}$ then
${\mathcal X}={\mathcal A}\cap {\mathcal A}'$, where ${\mathcal A}'$ is an apartment of ${\mathcal G}_{k}(V)$
adjacent with ${\mathcal A}$.
By Subsection 4.1,
$$g({\mathcal X})=g({\mathcal A})\cap g({\mathcal A}')$$
is an inexact subset of $g({\mathcal A})$.
It consists of $a(n,k)$ elements  and Lemma \ref{lemma4-3} implies that
$g({\mathcal X})$ is a special subset of $g({\mathcal A})$.
Since ${\mathcal A}$ and $g({\mathcal A})$ have the same number of special subsets,
a subset of ${\mathcal A}$ is special if and only if its image is a special subset of $g({\mathcal A})$.
Then ${\mathcal X}$ is a complement subset of ${\mathcal A}$ if and only if $g({\mathcal X})$ is a complement subset of $g({\mathcal A})$.
Lemma \ref{lemma4-4} implies that
$$d(P,Q)=d(g(P),g(Q))$$
and we get the claim.
\end{proof}

Since $\Phi_{S}$ is an isometric embedding of $\Gamma_{k}(V'/S)$ in $\Gamma_{k'}(V')$,
Lemma \ref{lemma5-3} guarantees that $f=\Phi_{S}\circ g$ is an isometric embedding of $\Gamma_{k}(V)$ in $\Gamma_{k'}(V')$.

Now suppose that $k<n-k$ and
the images of all apartments of ${\mathcal G}_{k}(V)$ are $J(n,k)$-subsets of second type.
Consider the mapping $f^{*}$ which sends every $P\in {\mathcal G}_{k}(V)$ to $f(P)^{0}$.
This is a $J$-mapping of ${\mathcal G}_{k}(V)$ to  ${\mathcal G}_{n'-k'}(V'^{*})$.
It transfers every apartment of ${\mathcal G}_{k}(V)$ to a $J(n,k)$-subset of first type.
Then $f^{*}$ is  an isometric embedding of $\Gamma_{k}(V)$ in $\Gamma_{n'-k'}(V'^{*})$
which means that $f$ is an isometric embedding of $\Gamma_{k}(V)$ in $\Gamma_{k'}(V')$.

\section{Strong $J$-mappings}

A $J$-mapping of ${\mathcal G}_{k}(V)$ to ${\mathcal G}_{k'}(V')$ is said to be {\it strong}
if there is an apartment of ${\mathcal G}_{k}(V)$ whose image  is an apartment in a parabolic subspace of ${\mathcal G}_{k'}(V')$.
The apartments preserving mappings considered in \cite[Section 3.4]{Pankov-book} are strong $J$-mappings.

If $n=2k\ge 4$ then every $J$-mapping of ${\mathcal G}_{k}(V)$ to ${\mathcal G}_{k'}(V')$
is strong (Theorem \ref{theorem2-1}) and, by Theorems \ref{theorem2-2} and \ref{theorem-main},
it is induced by a semilinear embedding of $V$ in $U/S$
or a semilinear embedding of $V$ in $(U/S)^{*}$, where
$$S\in {\mathcal G}_{k'-k}(V')\;\mbox{ and }\;U\in {\mathcal G}_{k'+k}(V').$$
In this section, we show that all strong $J$-mappings of ${\mathcal G}_{k}(V)$ to ${\mathcal G}_{k'}(V')$
are induced by semilinear embeddings if $1<k<n-1$.
For $k=1,n-1$ this fails \cite{HK}.

First we prove the following generalization of \cite[Theorem 3.10]{Pankov-book}.

\begin{cor}\label{cor1}
If $n=n'$ and $1<k<n-1$ then every strong $J$-mapping of ${\mathcal G}_{k}(V)$ to ${\mathcal G}_{k}(V')$
is induced by a semilinear embedding of $V$ in $V'$ or a semilinear embedding of $V$ in $V'^{*}$
and the second possibility can be realized only in the case when $n=2k$.
\end{cor}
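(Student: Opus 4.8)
The plan is to combine the classification of isometric embeddings (Theorem~\ref{theorem2-2}) with Theorem~\ref{theorem-main} and then eliminate the $(2k)$-embeddings that are not full semilinear embeddings, using the extra hypothesis that the $J$-mapping is \emph{strong}. First I would invoke Theorem~\ref{theorem-main} to conclude that the given strong $J$-mapping $f$ is an isometric embedding of $\Gamma_{k}(V)$ in $\Gamma_{k}(V')$. Since $n=n'$, the diameter inequality \eqref{eq2-1} forces $\min\{k,n-k\}=\min\{k',n'-k'\}$, and because $k'=k$ and $n'=n$ this is automatic; in particular the reduction used in the proof of Theorem~\ref{theorem-main} lets me assume $k\le n-k$ (the case $n-k<k<n-1$ being handled by passing to the canonically isomorphic $\Gamma_{n-k}(V^{*})$). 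Then Theorem~\ref{theorem2-2} applies: either $f=\Phi_{S}\circ(l)_{k}$ for some $S\in{\mathcal G}_{k'-k}(V')={\mathcal G}_{0}(V')=\{0\}$ and a semilinear $(2k)$-embedding $l:V\to V'/S=V'$, or $f=\Phi^{U}\circ(s)^{*}_{k}$ for some $U\in{\mathcal G}_{k'+k}(V')={\mathcal G}_{n}(V')=\{V'\}$ and a semilinear $(2k)$-embedding $s:V\to U^{*}=V'^{*}$. So $f$ is induced by a semilinear $(2k)$-embedding of $V$ into $V'$ or into $V'^{*}$.

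The remaining task is to upgrade ``semilinear $(2k)$-embedding'' to ``semilinear embedding'' (i.e.\ $n$-embedding), and this is where strongness enters. Treating the two cases symmetrically via the annihilator mapping (which sends apartments to apartments and $J(n,k)$-subsets to $J(n,k)$-subsets of the opposite type), it suffices to handle the first case $f=(l)_{k}$ with $l:V\to V'$ a semilinear $(2k)$-embedding. By hypothesis there is an apartment ${\mathcal A}={\mathcal J}_{k}(B)$ of ${\mathcal G}_{k}(V)$, coming from a base $B=\{e_{1},\dots,e_{n}\}$ of $V$, whose image $f({\mathcal A})$ is an apartment in a parabolic subspace of ${\mathcal G}_{k}(V')$. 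Since $n=n'$ and the parabolic subspace must contain an apartment of $J(n,k)$ realized inside ${\mathcal G}_{k}(V')$, the parabolic subspace is all of ${\mathcal G}_{k}(V')$ (here one uses that the ambient space has the same dimension $n$, so an apartment in $[S,U]_{k}$ with $[S,U]_{k}\ne{\mathcal G}_{k}(V')$ would be the image of an isometric embedding of a Johnson graph $J(m,\cdot)$ with $m<n$, contradicting that $f({\mathcal A})$ is a $J(n,k)$-subset), and $f({\mathcal A})$ is an apartment of ${\mathcal G}_{k}(V')$. Thus $f({\mathcal A})={\mathcal J}_{k}(B')$ where $B'=\{\langle l(e_{1})\rangle,\dots\}$; equivalently the vectors $l(e_{1}),\dots,l(e_{n})$ span $V'$. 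Since they are $n$ vectors in the $n$-dimensional space $V'$ and they span $V'$, they form a base of $V'$, so $l$ maps the base $B$ to a base of $V'$. A semilinear injection that sends one base to a base sends every independent set to an independent set, hence is a semilinear embedding; therefore $f$ is induced by a semilinear embedding $V\to V'$.

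To pin down when the second (dual) possibility occurs, I would argue exactly as in Theorem~\ref{theorem2-2}: if $f=\Phi^{U}\circ(s)^{*}_{k}$ with $s:V\to V'^{*}$ a semilinear $(2k)$-embedding, the above reasoning (applied through the annihilator mapping) shows $s$ is actually a semilinear embedding, and then $f$ is the composition of the isomorphism ${\mathcal G}_{k}(V)\to{\mathcal G}_{n-k}(V^{*})$-type construction with $(s)_{k}$; the image $f({\mathcal A})$ being a $J(n,k)$-subset of second type that is simultaneously an apartment forces, by Theorem~\ref{theorem2-1}, that such a $J(n,k)$-subset is an apartment only when $n=2k$. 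Hence the dual case can occur only for $n=2k$. I expect the main obstacle to be the bookkeeping around the reduction $k\le n-k$ and justifying that, under $n=n'$, a ``strong'' apartment image is forced to be a genuine apartment of ${\mathcal G}_{k}(V')$ rather than merely an apartment in a proper parabolic subspace; once that is settled, the passage from $(2k)$-embedding to $n$-embedding is the short dimension-count above, and the rest is a direct appeal to the cited classifications.
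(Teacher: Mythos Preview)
Your overall strategy matches the paper's, but there are two concrete errors that break the argument as written.

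\textbf{First, the identification of $U$ is wrong.} In Theorem~\ref{theorem2-2}(2) the subspace $U$ lies in ${\mathcal G}_{k'+k}(V')$, and here $k'=k$, so $U\in{\mathcal G}_{2k}(V')$, \emph{not} ${\mathcal G}_{n}(V')$. When $k<n-k$ we have $2k<n$, so $U$ is a proper subspace and $s$ maps $V$ into the $(2k)$-dimensional space $U^{*}$, not into $V'^{*}$. In particular your ``symmetry via the annihilator mapping'' does not apply: $s$ can never be a semilinear $n$-embedding into $U^{*}$ because $\dim U^{*}=2k<n$. The paper eliminates case (2) for $k<n-k$ differently: the image of $f$ lies in $\langle U]_{k}$ with $\dim U=2k<n$, and such a top cannot contain an apartment of ${\mathcal G}_{k}(V')$, contradicting strongness. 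Your final-paragraph remark that a second-type $J(n,k)$-subset cannot be an apartment (via Lemma~\ref{lemma2-2}) would in fact rescue this step, but as stated your case analysis is built on the false premise $U=V'$.

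\textbf{Second, the reduction for $k>n-k$ is a genuine gap.} Passing to $\Gamma_{n-k}(V^{*})$ gives, after the $k<n-k$ argument, a semilinear embedding $s:V^{*}\to V'^{*}$ inducing the map $P\mapsto f(P^{0})^{0}$ on ${\mathcal G}_{n-k}(V^{*})$. But the corollary asserts that $f$ itself is induced by a semilinear embedding $l:V\to V'$, and producing such an $l$ from $s$ is not automatic: a semilinear embedding of duals need not arise as the dual of a semilinear map in the obvious way. The paper invokes an external result (\cite[Subsection 3.4.3]{Pankov-book}) precisely for this passage. Your one-line ``handled by passing to $\Gamma_{n-k}(V^{*})$'' skips this non-trivial step entirely.
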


\begin{proof}
Let $f$ be a strong $J$-mapping of ${\mathcal G}_{k}(V)$ to ${\mathcal G}_{k}(V')$.
By Theorem  \ref{theorem-main}, $f$ is an isometric embedding of $\Gamma_{k}(V)$ in $\Gamma_{k}(V')$.
We suppose that $n=n'$ and $1<k<n-1$.
Then there is an apartment ${\mathcal A}\subset {\mathcal G}_{k}(V)$
such that $f({\mathcal A})$ is an apartment of ${\mathcal G}_{k}(V')$.

In the case when $n=2k$, the required statement follows from Theorem \ref{theorem2-2}.

If $k<n-k$ then, by Theorem \ref{theorem2-2}, we have the following possibilities:
\begin{enumerate}
\item[$\bullet$] $f=(l)_{k}$, where $l:V\to V'$ is a semilinear $(2k)$-embedding;
\item[$\bullet$] $f=(s)^{*}_{k}$, where $s:V\to U^{*}$ is a semilinear $(2k)$-embedding
and $U$ is a $(2k)$-dimensional subspace of $V'$.
\end{enumerate}
In the second case, the image of $f$ is contained in $\langle U]_{k}$.
Since $2k<n$, $\langle U]_{k}$ does not contain any apartment of ${\mathcal G}_{k}(V')$.
So, this case is impossible and $f=(l)_{k}$.
Then $l$ transfers any base of $V$ associated with ${\mathcal A}$ to a base of $V'$.
This implies that $l$ is a semilinear embedding.

Let $k>n-k$. Consider the  mapping which transfers every $P\in {\mathcal G}_{n-k}(V^{*})$ to  $f(P^{0})^{0}$.
This is a strong $J$-mapping of ${\mathcal G}_{n-k}(V^{*})$ to ${\mathcal G}_{n-k}(V'^{*})$.
By the arguments given above, it is induced by a semilinear embedding $s:V^{*}\to V'^{*}$.
Denote by $g$ the mapping  of the set of all subspaces of $V$ to the set of all subspaces of $V'$
which sends every $P$ to $s(P^{0})^{0}$.
By \cite[Subsection 3.4.3]{Pankov-book}, it is induced by a semilinear embedding $l:V\to V'$,
i.e.
$$g(P)=\langle l(P)\rangle$$
for every subspace $P\subset V$.
Since the restriction of $g$ to ${\mathcal G}_{k}(V)$ coincides with $f$,
we have $f=(l)_{k}$.
\end{proof}

\begin{cor}\label{cor2}
Suppose that $1<k<n-1$ and $n\ne 2k$. Then for every strong $J$-mapping $f:{\mathcal G}_{k}(V)\to {\mathcal G}_{k'}(V')$
one of the following possibilities is realized:
\begin{enumerate}
\item[{\rm (1)}] there exist $S\in {\mathcal G}_{k'-k}(V')$ and $U\in {\mathcal G}_{n+k'-k}(V')$
such that $f=\Phi^{U}_{S}\circ (l)_{k}$, where $l:V\to U/S$ is a semilinear embedding;
\item[{\rm (2)}] there exist $S'\in {\mathcal G}_{n'-k'-k}(V'^{*})$ and $U'\in {\mathcal G}_{n+n'-k'-k}(V'^{*})$ such that
$f={\rm A}\circ\Phi^{U'}_{S'}\circ (l)_{k}$, where $l:V\to U'/S'$ is a semilinear embedding and ${\rm A}$
is the annihilator mapping of ${\mathcal G}_{n'-k'}(V'^{*})$ to ${\mathcal G}_{k'}(V')$.
\end{enumerate}
\end{cor}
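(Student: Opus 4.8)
The plan is to feed $f$ into the classification of isometric embeddings (Theorem~\ref{theorem2-2}) and then to use the word ``strong'' to promote the semilinear $(2k)$-embeddings occurring there to genuine semilinear embeddings. By Theorem~\ref{theorem-main}, $f$ is an isometric embedding of $\Gamma_{k}(V)$ in $\Gamma_{k'}(V')$. If $n-k<k<n-1$, then $Q\mapsto f(Q^{0})$ is a strong $J$-mapping of ${\mathcal G}_{n-k}(V^{*})$ to ${\mathcal G}_{k'}(V')$ (namely $f$ composed with the annihilator isomorphism ${\mathcal G}_{n-k}(V^{*})\to{\mathcal G}_{k}(V)$, which preserves apartments and parabolic subspaces) to which the case treated below applies with $k$ replaced by $n-k$; the required description of $f$ then follows from the description of this mapping via the canonical isomorphism $\Gamma_{k}(V)\cong\Gamma_{n-k}(V^{*})$ and \cite[Subsection 3.4.3]{Pankov-book}, the two possibilities being interchanged. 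So I may assume $2\le k<n-k$; note that then $n>2k$.

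Under this assumption Theorem~\ref{theorem2-2} gives either $f=\Phi_{S}\circ(l)_{k}$ with $S\in{\mathcal G}_{k'-k}(V')$ and $l\colon V\to V'/S$ a semilinear $(2k)$-embedding, or $f=\Phi^{U}\circ(s)^{*}_{k}$ with $U\in{\mathcal G}_{k'+k}(V')$ and $s\colon V\to U^{*}$ a semilinear $(2k)$-embedding. In the second case I pass to $h\colon{\mathcal G}_{k}(V)\to{\mathcal G}_{n'-k'}(V'^{*})$, $h(P)=f(P)^{0}$: unwinding the annihilators and using the canonical isomorphism $U^{*}\cong V'^{*}/U^{0}$ one gets $h=\Phi_{S'}\circ(\iota)_{k}$ with $S'=U^{0}\in{\mathcal G}_{n'-k'-k}(V'^{*})$ and $\iota\colon V\to V'^{*}/S'$ a semilinear $(2k)$-embedding, and $h$ is again strong since the annihilator map is an isomorphism $\Gamma_{k'}(V')\cong\Gamma_{n'-k'}(V'^{*})$ preserving apartments and parabolic subspaces. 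Thus in both cases one is reduced to a strong $J$-mapping $g=\Phi_{S_{0}}\circ(l_{0})_{k}$ of ${\mathcal G}_{k}(V)$ into some ${\mathcal G}_{m}(W)$, with $S_{0}\in{\mathcal G}_{m-k}(W)$ and $l_{0}\colon V\to W/S_{0}$ a semilinear $(2k)$-embedding.

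The heart of the matter is that such a $g$ forces $l_{0}$ to be a semilinear embedding. By strongness there is a base $B$ of $V$ with $g({\mathcal A})=\Phi_{S_{0}}({\mathcal J}_{k}(l_{0}(B)))$ an apartment in a parabolic subspace $[S_{1},U_{1}]_{m}$ of ${\mathcal G}_{m}(W)$, where ${\mathcal A}={\mathcal J}_{k}(B)$. Because $l_{0}$ is a $(2k)$-embedding and $n>2k$, two elements of ${\mathcal J}_{k}(l_{0}(B))$ arising from disjoint $k$-subsets of $B$ meet in $0$; hence the intersection of all elements of $g({\mathcal A})$ equals $S_{0}$, while it contains $S_{1}$, so $S_{1}\subseteq S_{0}$. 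The induced subgraph on $g({\mathcal A})$ is isomorphic both to $J(n,k)$ and to $J(\dim U_{1}-\dim S_{1},\,m-\dim S_{1})$; comparing diameters and numbers of vertices gives $\dim U_{1}-\dim S_{1}=n$ together with $\dim S_{1}=m-k$ or $\dim U_{1}-m=k$. The second alternative would make $\dim S_{1}=m+k-n<m-k=\dim S_{0}$, so $S_{1}\subsetneq S_{0}$, and then $g({\mathcal A})$, transported to $U_{1}/S_{1}$ by $\Phi^{U_{1}}_{S_{1}}$, would be an apartment of an $n$-dimensional Grassmannian all of whose members contain the nonzero subspace $S_{0}/S_{1}$ --- which is impossible. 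Hence $\dim S_{1}=m-k=\dim S_{0}$ and $S_{1}=S_{0}$, so ${\mathcal J}_{k}(l_{0}(B))$ is an apartment of ${\mathcal G}_{k}(U_{1}/S_{0})$; therefore $\langle l_{0}(B)\rangle=U_{1}/S_{0}$ is $n$-dimensional and the $n$ vectors $l_{0}(B)$ form a base of it. Consequently $l_{0}$ sends a base of $V$ to a base, hence is a semilinear embedding, $\langle l_{0}(V)\rangle=U_{1}/S_{0}$, and $\dim U_{1}=n+m-k$.

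It remains to reassemble. In the first Theorem~\ref{theorem2-2} case this gives $f=\Phi^{U}_{S}\circ(l)_{k}$ with $l\colon V\to U/S$ a semilinear embedding and $U=U_{1}\in{\mathcal G}_{n+k'-k}(V')$, i.e. possibility (1). In the second case it gives $h=\Phi^{U'}_{S'}\circ(\iota)_{k}$ with $\iota\colon V\to U'/S'$ a semilinear embedding and $U'=U_{1}\in{\mathcal G}_{n+n'-k'-k}(V'^{*})$, whence $f={\rm A}\circ\Phi^{U'}_{S'}\circ(\iota)_{k}$, where ${\rm A}$ is the annihilator isomorphism ${\mathcal G}_{n'-k'}(V'^{*})\to{\mathcal G}_{k'}(V')$; this is possibility (2). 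Finally the case $n-k<k<n-1$ is deduced from the case just treated as in the first paragraph. The step I expect to be the main obstacle is the one in the third paragraph: recovering, from the mere graph-isomorphism type of $g({\mathcal A})$, that the parabolic subspace surrounding it must be $[S_{0},U_{1}]_{m}$ with $\dim(U_{1}/S_{0})=n$ --- this is exactly where $n\ne 2k$ is used, the second alternative above not being excluded when $n=2k$.
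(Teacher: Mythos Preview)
Your proof is correct, and its overall architecture matches the paper's: reduce to $k<n-k$ via the annihilator, apply Theorem~\ref{theorem2-2}, and use the ``strong'' hypothesis to upgrade the semilinear $(2k)$-embedding to a semilinear embedding.

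The genuine difference is in how that upgrade is carried out. The paper leans on Corollary~\ref{cor1}: once the image of $f$ (or of $f^{*}$) is seen to lie in a parabolic subspace $[S,U]_{k'}$ with $\dim(U/S)=n$, it factors through a strong $J$-mapping ${\mathcal G}_{k}(V)\to{\mathcal G}_{k}(U/S)$ and invokes Corollary~\ref{cor1} for equal dimensions. You instead argue directly: from the fact that $g({\mathcal A})$ is simultaneously a $J(n,k)$-subset and an apartment of a parabolic subspace $[S_{1},U_{1}]_{m}$, you identify $S_{1}=S_{0}$ and $\dim(U_{1}/S_{0})=n$ by comparing diameters and vertex counts of the two Johnson graphs and ruling out the ``wrong'' parameter via the observation that the common intersection of an apartment is zero. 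This is more self-contained for the step the paper summarizes as ``as in Corollary~\ref{cor1}'', and it makes transparent exactly where $n\ne 2k$ enters. On the other hand, your treatment of the case $n-k<k$ is quite compressed: converting the type-(1)/(2) description of $f_{*}$ on ${\mathcal G}_{n-k}(V^{*})$ into the description of $f$ on ${\mathcal G}_{k}(V)$ really does require the identification $N^{0}/M^{0}\cong (M/N)^{*}$ together with \cite[Subsection~3.4.3]{Pankov-book}, and the paper spells this out more explicitly by passing through the parabolic subspace containing the image of $f$ and again invoking Corollary~\ref{cor1}. Both routes are valid; yours is more elementary in the main step, the paper's is more modular.
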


\begin{proof}
By Theorem \ref{theorem-main}, $f$ is an isometric embedding of $\Gamma_{k}(V)$ in $\Gamma_{k'}(V')$.
Suppose that $k<n-k$. Theorem \ref{theorem2-2} states that one of the following possibilities is realized:
\begin{enumerate}
\item[$\bullet$] $f=\Phi_{S}\circ (l)_{k}$,
where $S\in {\mathcal G}_{k'-k}(V')$ and $l:V\to V'/S$ is semilinear $(2k)$-embedding;
\item[$\bullet$] $f=\Phi^{U}\circ (s)^{*}_{k}$, where $U\in {\mathcal G}_{k'+k}(V')$ and $s:V\to U^{*}$ is a semilinear $(2k)$-embed\-ding.
\end{enumerate}
As in Corollary \ref{cor1}, we establish that $l$ and $s$ both are semilinear embeddings.

We get a mapping of type (1) in the first case.

In the second case, 
the image of $f$ is contained in $[T,U]_{k'}$, where $T\in {\mathcal G}_{k+k'-n}(V')$ is the annihilator of $s(V)$ in $U$.
Consider the mapping $f^{*}$  sending every $P\in {\mathcal G}_{k}(V)$ to $f(P)^{0}$.
The image of this mapping is contained in $[S',U']_{n'-k'}$ with
$$S':=U^{0}\in {\mathcal G}_{n'-k'-k}(V'^{*})\;\mbox{ and }\;U':=T^{0}\in {\mathcal G}_{n+n'-k'-k}(V'^{*}).$$
Then $f^{*}=\Phi^{U'}_{S'}\circ g$, where $g$ is a $J$-mapping of ${\mathcal G}_{k}(V)$ to ${\mathcal G}_{k}(U'/S')$.
This $J$-mapping is strong (since $f$ and $f^{*}$ are strong $J$-mappings).
The dimension of $U'/S'$ is equal to $n$ and Corollary \ref{cor1} implies that $g$ is induced by a semilinear embedding of $V$ in $U'/S'$.
Thus $f$ is a mapping of type (2).

Now suppose that $k>n-k$. The image of $f$ coincides with the image of the mapping $f_{*}$ which transfers every $P\in {\mathcal G}_{n-k}(V^{*})$ to $f(P^{0})$.
This image is contained in
$$[N,M]_{k'},\;\;\;N\in{\mathcal G}_{k'-n+k}(V'),\;\;M\in{\mathcal G}_{k'+k}(V')$$
($f_{*}$ is a mapping of type (1)) or it is a subset of
$$[S,U]_{k'},\;\;\;S\in {\mathcal G}_{k'-k}(V'),\;\;U\in {\mathcal G}_{k'+n-k}(V')$$
($f_{*}$ is a mapping of type (2)).

In the second case, $f=\Phi^{U}_{S}\circ g$, where $g$ is a strong $J$-mapping of ${\mathcal G}_{k}(V)$ to ${\mathcal G}_{k}(U/S)$.
Since $U/S$ is $n$-dimensional,
Corollary \ref{cor1} implies that $g$ is induced by a semilinear embedding of $V$ in $U/S$ and $f$ is a mapping of type (1).

Suppose that the image of $f$ is contained in $[N,M]_{k'}$.
As above, we consider the mapping $f^{*}$ which sends every $P\in {\mathcal G}_{k}(V)$ to $f(P)^{0}$.
Its image is a subset of $[S',U']_{n'-k'}$ with
$$S':=M^{0}\in {\mathcal G}_{n'-k'-k}(V'^{*})\;\mbox{ and }\;U':=N^{0}\in {\mathcal G}_{n+n'-k'-k}(V'^{*}).$$
Then $f^{*}=\Phi^{U'}_{S'}\circ g$, where $g$ is a strong $J$-mapping of ${\mathcal G}_{k}(V)$ to ${\mathcal G}_{k}(U'/S')$.
The standard arguments show that $f$ is a mapping of type (2).
\end{proof}

\end{document}